\numberwithin{equation}{section}
\newtheorem{definition}{Definition}
\newtheorem{theorem}{Theorem}
\newtheorem{lemma}{Lemma}
\crefname{lemma}{Lemma}{Lemmas}
\newcommand*{\C}{\mathbb{C}}
\newcommand*{\R}{\mathbb{R}}
\newcommand*{\N}{\mathbb{N}}
\newcommand{\comment}[1]{}
\title[P\'{o}lya-like approximation]%
      {Real-rooted P\'{o}lya-like approximations to the Riemann Xi-function} 
\author[Y. SHI]{Yaoming SHI}
\date{Version of \today}
\subjclass[2010]{11M20, 11M26, 43A50}
\keywords{Riemann zeta function, Riemann Xi function, Fourier transform, real zeros}
\begin{abstract}
The Riemann $\Xi(z)$ function admits a Fourier transform of a even kernel $\Phi(t)$. The latter is related to the derivatives of Jacobi theta function $\theta(z)$, a modular form of weight $1/2$. P\'{o}lya noticed that when $t$ goes to infinity, $e^t$ goes to $e^t+ e^{-t}=2\cosh t$. He then approximated the kernel $\Phi(t)$ by $\Phi_{P}(t)$ that contained only the leading term and with $\exp t,\exp(9t/4)$ replaced by $2\cosh t,2\cos(9t/4)$. This procedure captured almost all of the contribution from the tail part (i.e., $t\to\infty$) of the kernel $\Phi(t)$. 

\indent 
We realize that when $t$ goes to infinity and $0\leqslant b<1,c\in\R$, $\cosh t+c \cosh(bt)$ goes to $\cosh t$. Thus we improve P\'{o}lya's approximation by replacing $\cosh(9t/4)$ with $\cosh(9t/4)+b\sum_{k=0}^{m-1}b_k \cosh(9kt/(4m))$ and adjusting the parameters $b,b_k,m$ such that (A) the approximated kernel $\Phi_{S}(b,b_k,m;t)$ goes to $\Phi(t)$when $t$ goes to infinity;(B) $\Phi_{S}(b,b_k,m;t)$ is identical to $\Phi(t)$ at $t=0$; (C) the Fourier transform of $\Phi_{S}(b,b_k,m;t)$,like in P\'{o}lya's case, has only real zeros.  Since this procedure also captures almost all of the contribution from the head part (i.e., near $t=0$) of the kernel $\Phi(t)$, we are able to anchor both ends of the kernel $\Phi(t)$.
\end{abstract}
\begin{document}

%

\footnotetext{yaoming\_shi@yahoo.com}
\section{Introduction}

Let $s,z$ be two complex variables, 
$\zeta(s)$ be the Riemann $\zeta$-function,  
\begin{equation}
\xi(s)=\frac{1}{2}s(s-1)\pi^{-s/2}\Gamma\left(\frac{s}{2}\right)\zeta(s)
\end{equation}
be the Riemann (lower case) $\xi$-function, and 
\begin{equation}
\Xi(z)=\xi(iz+1/2)
\end{equation}
be the Riemann (upper case) $\Xi$-function, which is an entire function~\cite{17,6} 
satisfying functional equation $\Xi(z)=\Xi(-z)$ and $\Xi(\bar{z})=\overline{\Xi(z)}$. Riemann hypothesis~\cite{1,2} is then equivalent to the statement that all the zeros of $\Xi(z)$ are real.

Riemann $\Xi(2z)$ function can be expressed as a Fourier transformation ~\cite{16,17,6}:
\begin{equation}
\Xi(2z)=\int_{-\infty}^{\infty}\Phi(t)\exp(izt){\rm d}u=2\int_0^{\infty}\Phi(t)\cos(zt){\rm d}t,
\end{equation}
where
\begin{equation}\label{Phi}
\aligned
\Phi(t)&=2e^{9t/4}\theta''(e^{t})+3e^{5t/4}\theta'(e^{t})\\
&=\sum_{n=1}^{\infty}\left(2\pi^2n^4e^{9t/4}-3\pi n^2e^{5t/4}\right)\exp\left(-\pi n^2 e^{t}\right)\\
&=\Phi(-t).
\endaligned
\end{equation}
\noindent And where $\theta(x)$ is the Jacobi theta function defined below in \eqref{thetay}.

As summarized by Dimitrov ~\cite{3} and with Rusev ~\cite{5},then a natural approach to resolving the Riemann hypothesis is to establish criteria for an entire function, or more specifically, a Fourier transform of a kernel, to possess only real zeros and to apply them to the Riemann $\Xi(z)$ function. There is no doubt that this was the main reason why so many celebrated mathematicians have been interested in the zero distribution of entire functions and, in particular, of Fourier transforms. Among them are
such distinguished masters of the Classical Analysis as A. Hurwitz, J.L.W.
V. Jensen, G. P´olya, H.G. Hardy, E. Tichmarsh, W. de Bruin, Newman, N. Obrechkoff, L. Tchakaloff etc.
 
For complete review we refer the readers to the excellent and 108 page review paper by Dimitrov and Rusev ~\cite{5}. See also the review paper by Ki ~\cite{9} and Hallum's 2014 Master Thesis ~\cite{7}(an easy-to-read reference).

\indent 
P\'{o}lya noticed that when $t$ goes to infinity, $\exp (at)\to \exp (at)+ \exp(-at)=2\cosh (at)$. He then approximated the kernel $\Phi(t)$ by $\Phi_{P}(t)$ that contained only the leading term and with $\exp t, \exp(9t/4)$ replaced by $2\cosh t,2\cosh (9t/4)$. We realize that when $t$ goes to infinity and $0<b<1,m\in\N$, $h(t)=\cosh(t/4)(4\cosh^2(t/m)-4b^2)^{m}$ goes to $\cosh (9t/4)$. Thus we improve P\'{o}lya's approximation by replacing $\cosh(9t/4)$ with $h(t)$ and adjust the parameters $b,m$ such that (A) the approximated kernel $\Phi_{S3}(b,m;t)$ goes to $\Phi(t)$when $t$ goes to infinity;(B) $\Phi_{S3}(b,m;t)$ is identical to $\Phi(t)$ at $t=0$; (C) the Fourier transform of $\Phi_{S3}(b,m;t)$, has only real zeros. Since this procedure also captures almost all of the contribution from the head part (i.e., near $t=0$) of the kernel $\Phi(t)$, we are able to anchor both ends of the kernel $\Phi(t)$. It remains to see if one can better approximate the body of $\Phi(t)$. 

Thus our criteria for picking kernel $K(t)$ to approximate the kernel $\Phi(t)$ of \eqref{Phi} are

(i) $K(t)\to \Phi(t),\quad t\to \infty$,

(ii) $K(0)=\Phi(0)$, 

(iii) $\int_0^{\infty}K(t)\cos(z t)\mathrm{d}t$ has only real zeros.

Using these criteria, we obtain several new and improved approximations to kernel $\Phi(t)$ and find out that their Fourier transforms have only real zeros. 

Here is the outline of the paper. We introduce notations and necessary lemmas in section 2. The approximations to $\Phi(t)$ by P\'{o}lya, de Bruijin, and Hejhal related to this paper are introduced in section 3. We also plot these approximated Phi functions and their corresponding Fourier transforms.  We present our new approximations to $\Phi(t)$ in section 4. Figures of these new approximated Phi functions and their Fourier transforms are readily compared to those in section 3.  In section 5, we provide conclusion.

\section{Notations and Definitions}
Almost all of the material in this section can be found in \cite{7,5} or references therein.
\noindent The complex function
\begin{equation}
\theta_3(z, \tau) :=\sum_{n=-\infty}^{\infty}\exp(i\pi n^2\tau + 2i\pi nz), z \in \C, \text{Im}\tau > 0,
\end{equation}
is one of the Jacobi theta-functions ~\cite{10,18}. The function $\theta_3(0, \tau)$ is holomorphic in the upper half-plane ($\text{Im}\tau > 0$) and satisfies the relations 
\begin{equation}
\theta_3(0, \tau+1) = \theta_3(0, \tau-1),
\end{equation}
\begin{equation}
\theta_3(0,-1/\tau) =(-i\tau)^{1/2}\theta_3(0, \tau), 
\end{equation}
where $(-i\tau)^{1/2} := \exp((1/2) \log(-i\tau))$. Thus $\theta_3(0, \tau)$ is a modular form of weight $1/2$.

\noindent For simplification, a Jacobi $\theta(x)$ function is often defined by setting $\tau=i x$ in $\theta_3(0, \tau)$ as:
\begin{equation}\label{thetay}
\theta(x):=\theta_3(0, i x) =\sum_{n=-\infty}^{\infty}\exp(-\pi n^2 x ), \qquad x > 0.
\end{equation}

\noindent It satisfies relations
\begin{equation}\label{thetaS1}
\theta(x) = \theta(x+2i),
\end{equation}
\begin{equation}\label{thetaS2}
\theta(1/x) =x^{1/2}\theta(x). 
\end{equation}

\begin{definition}
A real entire function $f(z)=\sum_{k=0}^{\infty}\frac{\gamma_k}{k!}z^k$ is in the Laguerre-P\'{o}lya class, written $f(z)\in \mathcal{LP}$, if
\begin{equation}
f(z)=cz^m\exp(-az^2+bz)\prod\limits_{k=1}^{\omega}\left(1+\frac{z}{z_k}\right)\exp(-z/z_k)
\end{equation}
where $b,c,z_k\in\R$(i.e.,all the zeros are real),$m\in\N_0,a\ge0,0\leqslant\omega\leqslant\infty$ and $\sum_{k=1}^{\omega}\frac{1}{z_k^2}<\infty$.
\end{definition}

G. P\'{o}lya ~\cite{14} introduced a class of functions he termed universal factors. Let $K(t)$ be an even and real-valued function that is absolutely integrable over $R$.
Also, suppose, for $b > 2$,$K(t) = O(\exp(-|t|^b)),t\to \pm \infty$.
\begin{definition}
Universal factors are the collection of functions, $\{\phi(t)\}$, such that if the integral
\begin{equation}
\int_{-\infty}^{\infty}K(t)\exp(izt) \mathrm{d}t \in \mathcal{LP},
\end{equation}
then the integral
\begin{equation}
\int_{-\infty}^{\infty}\phi(t)K(t)\exp(izt) \mathrm{d}t \in \mathcal{LP}.
\end{equation}
\end{definition}
G. P\'{o}lya was able to completely characterize the functions, $\phi(t)$, that comprise this class.
\begin{lemma}[P\'{o}lya's Universal Factor Theorem]\label{ufactor}
If $\phi(iz) \in \mathcal{LP}$, then $\phi(t)$ is a universal factor.
If the real analytic function $\phi(t)$ is a universal factor, then $\phi(it) \in \mathcal{LP}$.
\end{lemma}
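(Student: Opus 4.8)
The plan is to translate multiplication by $\phi(t)$ inside the integral into a differential operator acting on the Fourier transform, and then to read off both implications from a single operator statement. Writing $D=\mathrm{d}/\mathrm{d}z$, $F(z)=\int_{-\infty}^{\infty}K(t)e^{izt}\,\mathrm{d}t$, and $g(z):=\phi(iz)$, the elementary identity $\int_{-\infty}^{\infty}t^{k}K(t)e^{izt}\,\mathrm{d}t=(-iD)^{k}F(z)$ gives, at least formally,
\[
\int_{-\infty}^{\infty}\phi(t)K(t)e^{izt}\,\mathrm{d}t=\phi(-iD)F(z)=g(-D)F(z),
\]
since $\phi(u)=g(-iu)$ forces $\phi(-iD)=g(-D)$. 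Because $\mathcal{LP}$ is invariant under $z\mapsto-z$, the assertion to be proved becomes: the operator $g(-D)$ sends the admissible Fourier transforms $F\in\mathcal{LP}$ into $\mathcal{LP}$ \emph{if and only if} $g\in\mathcal{LP}$. I would also record at the outset that the decay hypothesis $K(t)=O(e^{-|t|^{b}})$ with $b>2$ makes every admissible $F$ an entire function of order at most $2$, which is exactly the order class of $\mathcal{LP}$; this is what will let Hurwitz-type limits stay inside $\mathcal{LP}$ rather than merely producing real-rooted functions of runaway growth.

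For the sufficiency direction I would expand the Hadamard factorization of $g=\phi(iz)\in\mathcal{LP}$ and treat the operator $g(-D)$ factor by factor. The monomial $cz^{m}$ contributes $c(-D)^{m}$, i.e.\ differentiation, which preserves $\mathcal{LP}$ by Rolle's theorem together with Hurwitz's theorem in the limit; the exponential $e^{\beta z}$ contributes the translation $F(z)\mapsto F(z-\beta)$; and each elementary factor $(1+z/z_{k})e^{-z/z_{k}}$ contributes (up to a translation) the map $F\mapsto F'+\mu F=e^{-\mu z}\frac{\mathrm{d}}{\mathrm{d}z}\bigl(e^{\mu z}F\bigr)$ with $\mu=\pm z_{k}\in\R$, which preserves $\mathcal{LP}$ because $e^{\mu z}F\in\mathcal{LP}$, its derivative is in $\mathcal{LP}$, and multiplication by the zero-free $e^{-\mu z}$ does not move zeros. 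The genuinely hard factor is the Gaussian $e^{-\alpha z^{2}}$ with $\alpha\ge0$, which contributes the operator $e^{-\alpha D^{2}}$; this is the classical P\'{o}lya result that the root-separating ``backward heat'' operator preserves $\mathcal{LP}$ (already visible on $z^{2}-1\mapsto z^{2}-(1+2\alpha)$), and I expect it to be \textbf{the main obstacle}, to be imported as a lemma rather than re-proved. Assembling the finite products and passing to the infinite product then requires showing that the corresponding operators converge locally uniformly when applied to $F$, after which Hurwitz's theorem places the limit $g(-D)F$ in $\mathcal{LP}$.

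For the necessity direction I would argue contrapositively from the operator picture: assuming $\phi$ is a universal factor, apply $g(-D)$ to the family of admissible transforms and show $g=\phi(iz)$ cannot fail to lie in $\mathcal{LP}$. Concretely I would feed $g(-D)$ the Fourier transforms of super-Gaussian kernels (for instance regularizations of $e^{-\gamma t^{2}}$ meeting the $b>2$ decay), which are order-$\le2$ members of $\mathcal{LP}$ with prescribed real zeros, and let the regularization parameter run so that these transforms converge locally uniformly to a simple probe such as a Gaussian or a shifted exponential. Since each output $g(-D)F$ lies in $\mathcal{LP}$ by hypothesis, the order bound from the decay condition keeps the family normal, and Hurwitz's theorem forces the limit---which encodes $g$ itself---to have only real zeros and to retain the genus and order compatible with the Hadamard form; a non-real zero of $g$ would survive into one of these limits and contradict membership in $\mathcal{LP}$. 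Real-analyticity of $\phi$ is what guarantees that $g(-D)$ is a well-defined operator of infinite order, so that the probe computations on entire functions are legitimate.

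The two directions therefore hinge on different difficulties: sufficiency on the preservation of $\mathcal{LP}$ under the Gaussian factor $e^{-\alpha D^{2}}$ and on the convergence of the infinite operator product, and necessity on arranging the test kernels and the limiting argument so that the limit recovers precisely $\phi(iz)$ while the order-$\le2$ control keeps that limit inside $\mathcal{LP}$ rather than merely real-rooted. I would isolate the Gaussian-factor lemma first, since both the direct factor-by-factor argument and the final normality and Hurwitz step in the converse lean on it.
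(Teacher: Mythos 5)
The paper gives no proof of this lemma to compare against: it is imported verbatim as P\'{o}lya's classical theorem with a citation to \cite{13,14}, so your attempt can only be measured against the classical argument itself. Your overall strategy is the right one --- rewriting multiplication of the kernel by $\phi(t)$ as the infinite-order operator $\phi(-iD)$ acting on the transform, factoring that operator along the Hadamard factorization of $\phi(iz)$ for sufficiency, and probing with Gaussian-type kernels for necessity is exactly the Hermite--Poulain/P\'{o}lya route.

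However, as it stands the proposal is a plan rather than a proof, and the two places where all the analytic content lives are precisely the ones left unexecuted. For sufficiency, you explicitly defer the Gaussian factor $e^{-\alpha D^{2}}$ to an imported lemma and only gesture at the locally uniform convergence of the infinite operator product $\prod_k\bigl(1-D/z_k\bigr)e^{D/z_k}$ applied to $F$; the latter requires genuine estimates tied to the order bound $\rho=b/(b-1)<2$ coming from the decay hypothesis and to the condition $\sum z_k^{-2}<\infty$, and without them the Hurwitz limiting step is not justified. For necessity, the sketch never exhibits the mechanism by which the limit ``encodes $g$'': the standard computation is to take $K_a(t)$ a (regularized) Gaussian $e^{-at^{2}}$, complete the square so that
\begin{equation*}
\int_{-\infty}^{\infty}\phi(t)e^{-at^{2}}e^{izt}\,\mathrm{d}t
=e^{-z^{2}/(4a)}\int_{-\infty}^{\infty}\phi\bigl(s+\tfrac{iz}{2a}\bigr)e^{-as^{2}}\,\mathrm{d}s ,
\end{equation*}
justify the contour shift, and then rescale $z$ so that the normalized transforms converge locally uniformly to $e^{-aw^{2}}\phi(iw)$ up to constants; only then do closure of $\mathcal{LP}$ under such limits and the zero-freeness of the Gaussian force $\phi(iw)\in\mathcal{LP}$. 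Saying that ``a non-real zero of $g$ would survive into one of these limits'' presupposes exactly this identification of the limit, so the necessity direction has a genuine gap until that computation is carried out.
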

\begin{lemma}[Enestr\"{O}m-Kakeya Theorem]\label{Enestrom_Kakeya}
If $0 < a_0 < a_1 < a_2 <\cdots < a_n$, then the polynomial $p_n(z) =\sum_{k=0}^{n} a_k z_k$ has all of its zeros in the closed unit disk $D = {z : |z| < 1}$.
\end{lemma}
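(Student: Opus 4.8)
The plan is to use the standard Eneström--Kakeya device of multiplying $p_n$ by $(1-z)$, so that the monotonicity of the coefficients is converted into a telescoping sum. Reading the displayed polynomial as $p_n(z)=\sum_{k=0}^{n}a_k z^k$, I would set $q(z)=(1-z)p_n(z)$, expand, and collect powers of $z$ to obtain
\[
q(z)=a_0+\sum_{k=1}^{n}(a_k-a_{k-1})z^k-a_n z^{n+1}.
\]
The point of this rearrangement is that every coefficient difference $a_k-a_{k-1}$ is now positive by hypothesis, and these differences telescope: $a_0+\sum_{k=1}^{n}(a_k-a_{k-1})=a_n$.

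Next I would estimate $q$ on the region $|z|>1$ and show that it cannot vanish there. Grouping the leading monomial against the remaining terms, I would bound
\[
\Bigl|\,a_0+\sum_{k=1}^{n}(a_k-a_{k-1})z^k\Bigr|\le a_0+\sum_{k=1}^{n}(a_k-a_{k-1})|z|^k\le |z|^n\Bigl(a_0+\sum_{k=1}^{n}(a_k-a_{k-1})\Bigr)=a_n|z|^n,
\]
where the first inequality uses the triangle inequality together with $a_k-a_{k-1}>0$, and the second uses $|z|^k\le|z|^n$ for $0\le k\le n$ when $|z|>1$, followed by the telescoping identity. Since $|z|>1$ forces $a_n|z|^n<a_n|z|^{n+1}=|a_n z^{n+1}|$ strictly, the reverse triangle inequality gives $|q(z)|\ge |a_n z^{n+1}|-a_n|z|^n>0$.

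Finally I would transfer this conclusion back to $p_n$. Any zero $z_0$ of $p_n$ with $|z_0|>1$ satisfies $z_0\ne1$, hence is also a zero of $q(z)=(1-z)p_n(z)$ lying in $|z|>1$, contradicting the estimate above. Therefore every zero of $p_n$ lies in the closed unit disk $\{z:|z|\le1\}$, which is the statement (I read the "$<$" in the displayed set as the intended "$\le$", i.e.\ the closed disk; note also that $z=1$ is never a zero since $p_n(1)=\sum_{k=0}^{n}a_k>0$). I do not expect a genuine obstacle here: the argument is a direct application of the triangle and reverse-triangle inequalities, and the only point requiring care is the bookkeeping after multiplication by $(1-z)$ together with applying $|z|^k\le|z|^n$ in the correct direction so that the estimate is strict precisely for $|z|>1$.
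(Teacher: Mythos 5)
Your proof is correct, and it is the standard argument for the Enestr\"{o}m--Kakeya theorem: multiply by $(1-z)$ to telescope the monotone coefficients, bound the lower-order part by $a_n|z|^n$ for $|z|>1$, and conclude via the reverse triangle inequality. The paper itself states this lemma as a classical result without giving any proof, so there is nothing to compare against; your argument fills that gap correctly, and you are also right that the paper's ``$|z|<1$'' in the definition of the closed disk is a typo for ``$|z|\le 1$'' (as is ``$z_k$'' for ``$z^k$'' in the polynomial).
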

\begin{lemma}[Hermite-Biehler theorem]\label{Hermite_Biehler}
If the zeros of the algebraic polynomial with complex
coefficients $p_n(z) = \sum_{k=0}^n c_kz_k$ belong to unit disk $D$ and if we set $z = \cos \alpha + i \sin \alpha$ and separate the real and the imaginary parts,$p_n(z) = A(\alpha) + iB(\alpha)$, then the trigonometric polynomials $A(\alpha)\in \mathcal{LP}$ and $B(\alpha)\in \mathcal{LP}$ and their zeros interlace.
\end{lemma}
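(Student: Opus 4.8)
The plan is to read off the real and imaginary parts directly from the polar form of $p_n$ on the unit circle. First I would write $p_n(z)=c_n\prod_{j=1}^{n}(z-z_j)$ with every $|z_j|<1$ (the set $D$ is the \emph{open} disk), restrict to $z=e^{i\alpha}$, and put $p_n(e^{i\alpha})=r(\alpha)e^{i\Theta(\alpha)}$ with $r(\alpha)=|p_n(e^{i\alpha})|$ and $\Theta(\alpha)=\arg p_n(e^{i\alpha})$. Since no zero lies on the circle, $r(\alpha)>0$ for all real $\alpha$, so from $A(\alpha)=r(\alpha)\cos\Theta(\alpha)$ and $B(\alpha)=r(\alpha)\sin\Theta(\alpha)$ the zeros of $A$ and $B$ are exactly the zeros of $\cos\Theta$ and $\sin\Theta$. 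The whole statement therefore reduces to understanding the single phase function $\Theta(\alpha)$.

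The engine of the argument is strict monotonicity of $\Theta$. Differentiating $\arg(e^{i\alpha}-z_j)=\mathrm{Im}\log(e^{i\alpha}-z_j)$ yields
\[
\frac{d}{d\alpha}\arg\!\left(e^{i\alpha}-z_j\right)=\frac{1-\mathrm{Re}\!\left(\bar z_j\,e^{i\alpha}\right)}{\left|e^{i\alpha}-z_j\right|^{2}}\ge\frac{1-|z_j|}{\left|e^{i\alpha}-z_j\right|^{2}}>0 ,
\]
so each factor has a strictly increasing argument; summing over $j$ gives $\Theta'(\alpha)>0$ everywhere, and the argument principle (all $n$ zeros are enclosed by the circle) shows that $\Theta$ increases by exactly $2\pi n$ over one period $[0,2\pi)$.

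With $\Theta$ strictly increasing and of total variation $2\pi n$ per period, the equations $\cos\Theta(\alpha)=0$ and $\sin\Theta(\alpha)=0$ are solved precisely at $\Theta\equiv\tfrac{\pi}{2}\pmod{\pi}$ and $\Theta\equiv0\pmod{\pi}$; since consecutive multiples of $\tfrac{\pi}{2}$ alternate between these two residue classes, $A$ and $B$ each have $2n$ simple real zeros per period and these zeros strictly interlace. This is the Hermite--Biehler interlacing conclusion, and reality of all zeros is immediate, since they are the values of the real variable $\alpha$.

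It remains to promote ``real-rooted trigonometric polynomial'' to membership in $\mathcal{LP}$, which I expect to be the most delicate step. I would realize the real trigonometric polynomial $A$ of degree $n$ as $A(\alpha)=\tfrac12 e^{-in\alpha}P(e^{i\alpha})$ for a degree-$2n$, conjugate self-inversive polynomial $P$; the interlacing step forces all $2n$ zeros $e^{i\gamma_1},\dots,e^{i\gamma_{2n}}$ of $P$ onto the unit circle with real, distinct $\gamma_l$. Using $e^{i\alpha}-e^{i\gamma_l}=2i\,e^{i(\alpha+\gamma_l)/2}\sin\tfrac{\alpha-\gamma_l}{2}$ collapses this into
\[
A(\alpha)=C\prod_{l=1}^{2n}\sin\frac{\alpha-\gamma_l}{2},\qquad C\in\R ,
\]
a finite product of real translates of $\sin(z/2)\in\mathcal{LP}$; since $\mathcal{LP}$ is closed under real translation and finite products, $A\in\mathcal{LP}$, and the same reasoning gives $B\in\mathcal{LP}$. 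The main obstacles I anticipate are confined to this last stage: verifying that the accumulated phase $\prod_l e^{i(\alpha+\gamma_l)/2}$ combines with $c_n$ into a real constant $C$ (so that the product is genuinely real-valued), confirming simplicity of the $\gamma_l$ from the strict monotonicity of $\Theta$, and checking that no degenerate configuration (a drop in degree, or a shared zero of $A$ and $B$) can occur — all of which hinge on the zeros of $p_n$ lying strictly interior to $D$.
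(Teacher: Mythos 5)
The paper does not actually prove this lemma: it is quoted as a classical result (Hermite--Biehler, in the form used by P\'olya and by Dimitrov--Rusev) and invoked without proof, so there is no in-paper argument to compare against. Your proof is correct and is the standard self-contained route: the computation $\frac{d}{d\alpha}\arg(e^{i\alpha}-z_j)=\bigl(1-\mathrm{Re}(\bar z_j e^{i\alpha})\bigr)/|e^{i\alpha}-z_j|^{2}>0$ is right, the argument principle gives total phase increase $2\pi n$, and the alternation of the residue classes $\Theta\equiv 0$ and $\Theta\equiv\pi/2\pmod{\pi}$ yields $2n$ simple zeros of each of $A$ and $B$ per period with strict interlacing. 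The passage to $\mathcal{LP}$ via the self-inversive polynomial $P(w)=w^{n}p_n(w)+p_n^{*}(w)$ of degree $2n$, the factorization $e^{i\alpha}-e^{i\gamma_l}=2ie^{i(\alpha+\gamma_l)/2}\sin\frac{\alpha-\gamma_l}{2}$ (the $e^{in\alpha}$ phases cancel exactly, and realness of $A$ forces $C\in\R$), and closure of $\mathcal{LP}$ under real translates and finite products are all sound. Two small points deserve attention. First, your reading of $D$ as the \emph{open} disk is essential and correct for the application: with a zero on $|z|=1$ the phase $\Theta$ need not be defined there and $A$, $B$ can share a zero, so only weak interlacing survives; the paper's Lemma~\ref{Enestrom_Kakeya} writes ``closed unit disk'' but defines $D=\{z:|z|<1\}$, and it is the strict Enestr\"om--Kakeya conclusion (strictly increasing positive coefficients force $|z|<1$) that makes your hypothesis available downstream. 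Second, you should say explicitly that the identity $A(\alpha)=C\prod_l\sin\frac{\alpha-\gamma_l}{2}$, established for real $\alpha$, extends to all complex $\alpha$ by analytic continuation of both sides, which is what converts ``all real zeros on one period'' into the global statement $A\in\mathcal{LP}$; with that sentence added the argument is complete.
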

The ~\cref{Enestrom_Kakeya} and ~\cref{Hermite_Biehler} already imply 
\begin{lemma}
If $0 < a_0 < a_1 < a_2 <\cdots < a_n$, then
\begin{equation}\label{eqpcos}
A(\alpha) = \sum_{k=0}^{n} a_k \cos (k\alpha) \in \mathcal{LP}, 
\end{equation}
\begin{equation}\label{eqpsin}
B(\alpha) = \sum_{k=1}^{n} a_k \sin (k\alpha) \in \mathcal{LP},
\end{equation}
and their zeros are interlace.
Thus $B(it),t\in \R$ is a universal factor.
\end{lemma}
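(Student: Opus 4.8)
The plan is to derive all four assertions by composing the two structural lemmas in the order signalled by the remark preceding the statement, and then to read off the universal-factor claim from \cref{ufactor}. First I would form the polynomial $p_n(z)=\sum_{k=0}^{n}a_k z^{k}$ and apply \cref{Enestrom_Kakeya}. Because the hypotheses are the \emph{strict} chain $0<a_0<a_1<\cdots<a_n$, every ratio $a_k/a_{k+1}$ is $<1$, so \cref{Enestrom_Kakeya} places all zeros of $p_n$ strictly inside the unit disk $D$. I expect the strictness to be essential here: it is what keeps the zeros off the boundary circle, which is in turn what will force the trigonometric polynomials below to have only simple real zeros.

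Next I would restrict $p_n$ to the unit circle via $z=\cos\alpha+i\sin\alpha=e^{i\alpha}$ and split into real and imaginary parts. Since $e^{ik\alpha}=\cos k\alpha+i\sin k\alpha$ and the $k=0$ term is real, this gives $p_n(e^{i\alpha})=A(\alpha)+iB(\alpha)$ with $A(\alpha)=\sum_{k=0}^{n}a_k\cos k\alpha$ and $B(\alpha)=\sum_{k=1}^{n}a_k\sin k\alpha$, exactly the functions in \eqref{eqpcos} and \eqref{eqpsin} (the $k=0$ term drops out of $B$ because $\sin 0=0$). With the zeros of $p_n$ confined to $D$ by the previous step, \cref{Hermite_Biehler} now applies verbatim and yields $A(\alpha)\in\mathcal{LP}$, $B(\alpha)\in\mathcal{LP}$, together with the interlacing of their zeros. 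This settles the first three assertions; the only work here is the bookkeeping of matching the real/imaginary decomposition to the stated polynomials.

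For the last assertion I would set $\phi(t)=B(it)$ and check the hypothesis of \cref{ufactor}. The subtle point --- and the one place I expect to need genuine care --- is that $\phi$ is purely imaginary on the real axis, since $B(it)=i\sum_{k=1}^{n}a_k\sinh kt$, so membership in $\mathcal{LP}$ cannot be read off from $\phi$ itself. The way around this is to compute the composition that \cref{ufactor} actually requires: because $B$ is odd, $\phi(iz)=B(-z)=-B(z)$, and $-B(z)\in\mathcal{LP}$ since $B\in\mathcal{LP}$ and $\mathcal{LP}$ is closed under multiplication by the real constant $-1$. Hence $\phi(iz)\in\mathcal{LP}$, and the first half of \cref{ufactor} gives that $\phi(t)=B(it)$ is a universal factor, completing the proof. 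The spurious factor of $i$ is exactly what makes this step less automatic than the preceding ones, so it is the part I would write out most carefully.
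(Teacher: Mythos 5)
Your proposal is correct and follows essentially the same route as the paper, which presents this lemma as an immediate consequence of the Enestr\"{o}m--Kakeya and Hermite--Biehler lemmas combined with P\'{o}lya's universal factor theorem. Your explicit treatment of the final step --- using the oddness of $B$ to verify $\phi(iz)=-B(z)\in\mathcal{LP}$ for $\phi(t)=B(it)$ --- fills in a detail the paper leaves implicit, but it is the intended argument.
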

Let $K_{z}(a)$ be the Modified Bessel function of the second:
\begin{equation}
K_{z}(a)=\int_{0}^{\infty}\exp(-a\cosh t)\cosh(tz)\mathrm{d}t=K_{-z}(a).
\end{equation}

\begin{lemma}\label{Kiz}~\cite{13,14}
\begin{equation}
K_{iz}(2\pi)=K_{-iz}(2\pi) \in \mathcal{LP}.
\end{equation}
\end{lemma}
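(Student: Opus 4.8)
The plan is first to unwind the definition of $K_z$ with $a=2\pi$ and $z\mapsto iz$. Since $\cosh(itz)=\cos(zt)$ and the kernel is even in $t$,
\begin{equation}
K_{iz}(2\pi)=\int_{0}^{\infty}e^{-2\pi\cosh t}\cos(zt)\,\mathrm{d}t=\tfrac12\int_{-\infty}^{\infty}e^{-2\pi\cosh t}\cos(zt)\,\mathrm{d}t,
\end{equation}
so that $K_{iz}(2\pi)$ is exactly the Fourier cosine transform of the even, real, nonnegative kernel $e^{-2\pi\cosh t}=e^{-\pi(e^{t}+e^{-t})}$. This kernel decays like $O(e^{-\pi e^{|t|}})$, hence is $O(e^{-|t|^{b}})$ for \emph{every} $b>2$, so it satisfies the admissibility hypotheses stated just before \cref{ufactor}. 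Moreover $K_{iz}(2\pi)$ is a real, even, entire function of order one, so by the Hadamard factorisation theorem it belongs to $\mathcal{LP}$ if and only if all of its zeros are real. Thus the whole problem reduces to proving the reality of the zeros of this cosine transform.

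\noindent
Second, I would try to exhibit $K_{iz}(2\pi)$ as a locally uniform limit of functions already known to lie in $\mathcal{LP}$, and then invoke the closure of $\mathcal{LP}$ under locally uniform limits (Hurwitz's theorem). The natural approximants, in view of the machinery assembled above, are trigonometric (cosine) polynomials: discretising the integral on a mesh $h_n\to 0$ produces functions of the shape $\sum_{k=0}^{n}a_k^{(n)}\cos\!\big(k\,h_n z\big)$, and the cosine criterion \eqref{eqpcos} — which is \cref{Enestrom_Kakeya} fed into \cref{Hermite_Biehler} — guarantees membership in $\mathcal{LP}$ \emph{provided} the coefficients satisfy $0<a_0^{(n)}<a_1^{(n)}<\dots<a_n^{(n)}$. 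One would then identify the limit of these approximants with $K_{iz}(2\pi)$ and conclude.

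\noindent
The hard part is precisely the coefficient monotonicity. Equispaced sampling of the kernel gives $a_k^{(n)}=h_n\,e^{-2\pi\cosh(k h_n)}$, which is strictly \emph{decreasing} in $k$ because $\cosh$ increases; such cosine polynomials fail the Eneström--Kakeya hypothesis and in general are \emph{not} in $\mathcal{LP}$ (already $a_0+a_1\cos\alpha$ with $a_0>a_1$ has no real zeros). So the genuine obstacle is to engineer approximants that both converge to $K_{iz}(2\pi)$ and honestly lie in $\mathcal{LP}$ — for instance by a non-uniform choice of nodes and weights, by passing to the Jensen polynomials of $K_{iz}(2\pi)$, or by building the kernel out of the $\cosh(\lambda t)$ universal factors (each a universal factor since $\cosh(\lambda\cdot it)=\cos(\lambda t)\in\mathcal{LP}$, via \cref{ufactor}) starting from a Gaussian seed whose transform is manifestly in $\mathcal{LP}$. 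Since the reality of the zeros of this particular transform is a classical theorem of P\'{o}lya, I expect the cleanest route is to reduce to it and cite it directly (\cite{14}, and see also \cite{13} and the exposition \cite{7}), reserving the constructive Eneström--Kakeya/Hermite--Biehler argument for the explicitly monotone cosine kernels introduced later in the paper.
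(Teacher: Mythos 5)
The paper offers no proof of this lemma at all --- it is stated as a known classical result with only the citations to P\'olya \cite{13,14} --- and your proposal, after correctly unwinding $K_{iz}(2\pi)$ as the cosine transform of the even kernel $e^{-2\pi\cosh t}$ and honestly demonstrating why the Enestr\"om--Kakeya/Hermite--Biehler route fails here (equispaced sampling yields strictly \emph{decreasing} coefficients, violating the hypothesis of \cref{Enestrom_Kakeya}), arrives at exactly the same resolution: cite P\'olya's theorem directly. So your treatment is essentially the paper's, with the added value of explaining why the constructive cosine-polynomial machinery used for the later kernels cannot be applied to this one.
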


\begin{lemma}\label{Kizplusc}~\cite{13,14}
Let $A,c>0$, then
\begin{equation}
K_{iz+c}(A)+ K_{iz-c}(A)\in \mathcal{LP}.
\end{equation}
\end{lemma}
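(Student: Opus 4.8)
The plan is to recognize the sum $K_{iz+c}(A)+K_{iz-c}(A)$ as the Fourier transform of an admissible kernel multiplied by a \emph{universal factor}, and then to invoke P\'{o}lya's Universal Factor Theorem (\cref{ufactor}). First I would insert the integral representation of the Modified Bessel function and collapse the two hyperbolic cosines using $\cosh(X+Y)+\cosh(X-Y)=2\cosh X\cosh Y$ with $X=izt$ and $Y=ct$:
\begin{equation*}
K_{iz+c}(A)+K_{iz-c}(A)=\int_0^{\infty}e^{-A\cosh t}\bigl[\cosh((iz+c)t)+\cosh((iz-c)t)\bigr]\mathrm{d}t=2\int_0^{\infty}e^{-A\cosh t}\cosh(izt)\cosh(ct)\,\mathrm{d}t.
\end{equation*}
Since $\cosh(izt)=\cos(zt)$ and the map $t\mapsto e^{-A\cosh t}\cosh(ct)$ is even, this rewrites as the full-line transform
\begin{equation*}
K_{iz+c}(A)+K_{iz-c}(A)=\int_{-\infty}^{\infty}\cosh(ct)\,e^{-A\cosh t}\,e^{izt}\,\mathrm{d}t.
\end{equation*}

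Next I would set $K(t)=e^{-A\cosh t}$ and $\phi(t)=\cosh(ct)$ and check the two ingredients the Universal Factor Theorem needs. The kernel $K$ is even, real-valued, and absolutely integrable, and because $\cosh t$ grows exponentially it satisfies $K(t)=O(\exp(-|t|^{b}))$ for \emph{every} $b>2$, so it meets P\'{o}lya's standing hypotheses; moreover its transform is $\int_{-\infty}^{\infty}K(t)e^{izt}\,\mathrm{d}t=2\int_0^{\infty}e^{-A\cosh t}\cos(zt)\,\mathrm{d}t=2K_{iz}(A)\in\mathcal{LP}$. The multiplier $\phi(t)=\cosh(ct)$ is a universal factor by \cref{ufactor}, since $\phi(iz)=\cos(cz)$ is real entire of order one with only the real zeros $(k+\tfrac12)\pi/c$ and $\sum z_k^{-2}<\infty$, hence $\cos(cz)\in\mathcal{LP}$. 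Applying the universal-factor property to $K$ then yields $\int_{-\infty}^{\infty}\phi(t)K(t)e^{izt}\,\mathrm{d}t\in\mathcal{LP}$, which is exactly $K_{iz+c}(A)+K_{iz-c}(A)$.

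The one point that requires care — and the step I expect to be the main obstacle — is the base case $K_{iz}(A)\in\mathcal{LP}$ for a \emph{general} $A>0$, whereas \cref{Kiz} records only the value $A=2\pi$. I would supply this by the same argument and references that establish \cref{Kiz}, observing that P\'{o}lya's classical proof that $\int_0^{\infty}e^{-A\cosh t}\cos(zt)\,\mathrm{d}t$ has only real zeros goes through verbatim for every $A>0$, the value $2\pi$ playing no special role there. (It is worth noting that the operation $F(w)\mapsto F(w-ic)+F(w+ic)$ appearing here — namely $F(w)=K_{iw}(A)$ shifted by $\pm ic$ — is \emph{not} $\mathcal{LP}$-preserving for arbitrary $F\in\mathcal{LP}$; it succeeds precisely because $F$ arises as the transform of the rapidly decaying kernel $e^{-A\cosh t}$, which is why the universal-factor framework, rather than a bare shift-and-add, is the correct tool.) With the general base case in hand, the universal-factor step closes the argument, and the verifications that $\cosh(ct)$ is universal and that $e^{-A\cosh t}$ satisfies the decay hypotheses are routine.
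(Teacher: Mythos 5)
Your argument is correct, and it is worth noting that the paper itself offers no proof of this lemma at all: it is simply quoted from P\'{o}lya \cite{13,14}, so there is no internal argument to compare against. Your route --- collapsing the two Bessel terms via $\cosh(X+Y)+\cosh(X-Y)=2\cosh X\cosh Y$ to get $K_{iz+c}(A)+K_{iz-c}(A)=\int_{-\infty}^{\infty}\cosh(ct)\,e^{-A\cosh t}\,e^{izt}\,\mathrm{d}t$, checking that $e^{-A\cosh t}$ meets P\'{o}lya's decay hypotheses, and then applying \cref{ufactor} to the multiplier $\cosh(ct)$ (legitimate since $\cosh(icz)=\cos(cz)\in\mathcal{LP}$) --- is the standard derivation and uses exactly the toolkit the paper assembles but never explicitly deploys for this lemma. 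You are also right to single out the base case as the only non-routine ingredient: \cref{Kiz} records $K_{iz}(2\pi)\in\mathcal{LP}$ only, whereas your argument needs $\int_{-\infty}^{\infty}e^{-A\cosh t}e^{izt}\,\mathrm{d}t=2K_{iz}(A)\in\mathcal{LP}$ for every $A>0$; that general statement is indeed P\'{o}lya's theorem and nothing about his proof is special to $A=2\pi$, but as written your proof still rests on a citation at precisely that point, just as the paper's does, so a fully self-contained treatment would have to reproduce that one classical fact. The parenthetical caution about the bare shift-and-add $F(w)\mapsto F(w+ic)+F(w-ic)$ not being an $\mathcal{LP}$-preserver in general is tangential but does correctly explain why the universal-factor framework, rather than an abstract shift argument, is the right packaging.
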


\section{Approximations to $\Phi(t)$ by P\'{o}lya, de Bruijin, and Hejhal}
\indent P\'{o}lya ~\cite{13,14} approximated $\Phi(t)$ with $\Phi_{P}(t)$ and $\Phi_{P2}(t)$ by keeping only the leading ($n=1$) term in \eqref{Phi} and replaced $e^{at}$ with $(e^{at}+e^{-at})=2\cosh(at)$:
\begin{equation}\label{PhiP}
\Phi_{P}(t)=4\pi^2\cosh(9t/4)\exp\left(-2\pi \cosh t\right),
\end{equation}
\begin{equation}\label{PhiP2}
\Phi_{P2}(t)=\left(4\pi^2\cosh(9t/4)-6\pi\cosh(5t/4)\right)\exp\left(-2\pi \cosh t\right).
\end{equation}
Thus when $t\to\infty$, $\Phi(t)\to\Phi_{P}(t),\Phi_{P2}(t)\to\Phi_{P}(t)$.
The Fourier transforms of $\Phi_{P}(t)$ and $\Phi_{P2}(t)$, are given by:
\begin{equation}
\Xi_{P}(2z)=4\pi^2\left(K_{iz+9/4}(2\pi)+K_{iz-9/4}(2\pi)\right),
\end{equation}
\begin{equation}
\aligned
\Xi_{P2}(2z)&=4\pi^2\left(K_{iz+9/4}(2\pi)+K_{iz-9/4}(2\pi)\right)\\
&-6\pi\left(K_{iz+5/4}(2\pi)+K_{iz-5/4}(2\pi)\right).
\endaligned
\end{equation}
\noindent P\'{o}lya proved that $\Xi_{P}(2z)$ and $\Xi_{P2}(2z)$ have only real zeros.

\noindent de Bruijn ~\cite{4} approximated $\Phi(t)$ with $\Phi_{dB}(t)$:
\begin{equation}\label{PhidB}
\aligned
\Phi_{dB}(t)&=\exp\left(-2\pi \cosh t\right)\\
&\times\left(4\pi^2\cosh(t/4)+(4\pi^3-6\pi)\cosh(5t/4)+4\pi^2\cosh(9t/4)\right)
\endaligned
\end{equation}
The Fourier transform of $\Phi_{dB}(t)$ is given by:
\begin{equation}
\aligned
\Xi_{dB}(2z)&=4\pi^2\left(K_{iz+9/4}(2\pi)
+K_{iz-9/4}(2\pi)\right)\\
&+(4\pi^3-6\pi)\left(K_{iz+5/4}(2\pi)
+K_{iz-5/4}(2\pi)\right)\\
&+4\pi^2\left(K_{iz+1/4}(2\pi)+K_{iz-1/4}(2\pi)\right),
\endaligned
\end{equation}
de Bruijn proved that the function $\Xi_{dB}(2z)$ has only real zeros.

\noindent Hejhal ~\cite{8} approximated $\Phi(t)$ with $\Phi_{H,m}(t)$:
\begin{equation}\label{PhiH}
\Phi_{H,m}(t)=\sum_{n=1}^{m}\left(4\pi^2n^4\cosh(9t/4)-6\pi n^2\cosh(5t/4)\right)\exp\left(-2\pi n^2 \cosh t\right)
\end{equation}
The resulting $\Xi_{H,m}(z)$ is given by:
\begin{equation}
\aligned
\Xi_{H,m}(2z)=\sum_{n=1}^{m}4n^4\pi^2\left(K_{iz+9/4}(2\pi n^2)+K_{iz-9/4}(2\pi n^2)\right)\\
-\sum_{n=1}^{m}6n^2\pi\left(K_{iz+5/4}(2\pi n^2)+K_{iz-5/4}(2\pi n^2)\right).
\endaligned
\end{equation}
Clearly when $m\to\infty$, $\Phi_{H,m}(t) \not\to \Phi(t)$, and $\Xi_{H,m}(2z) \not\to \Xi(2z)$. Thus the study of this general approximation is often considered not to be directly related to a possible proof of the Riemann hypothesis. Nevertheless Hejhal proved that almost all the zeros of the function $\Xi_{H,m}(2z)$ are real. 

We notice that there is one thing in common in P\'{o}lya's approximation $\Phi_{P}(t), \Phi_{P2}(t)$ of \eqref{PhiP} and \eqref{PhiP2}, de Bruijin's approximation $\Phi_{dB}(t)$ of \eqref{PhidB}, and Hejhal's approximation $\Phi_{H,m}(t)$ of \eqref{PhiH}, that they all captured the contribution of the tail part (at $t\to\infty$) of $\Phi(t)$ in the Fourier transformation. But none of them converges to $\Phi(t)$ near $t=0$.  This aspect is clearly shown in Figure 1. Thus they can hardly capture the contribution of the head part (at $t=0$) of $\Phi(t)$ in the Fourier transformation. 
%
\begin{figure}[H]
\centering
\includegraphics[scale=0.8,keepaspectratio]{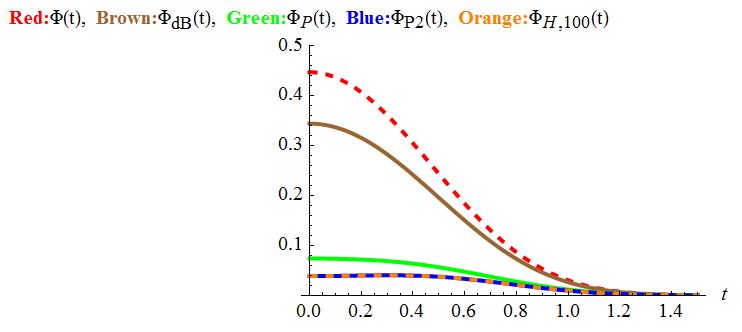}
\caption{Plots of various Phi functions vs. $t$. This includes $\Phi(t)$(Red), de bruijin's $\Phi_{D}(t)$(Brown), P\'{o}lya's $\Phi_{P}(t)$(Green) and $\Phi_{P2}(t)$(Blue), and Hejhal's $\Phi_{H,100}(t)$(Black). There is no visible difference between $\Phi_{H,10}(t)$ and $\Phi_{H,100}(t)$. }\label{figure1}
\end{figure}

A natural question then arises: Is it possible to find approximations to $\Phi(t)$ such that they converge to $\Phi(t)$ at $t\to\infty$ and $t=0$, and the corresponding Fourier transforms have only real zeros ?
We will give positive answers to this question in the next subsection.

In Figures 2 below we compare $\Xi_{P}(z)$(Blue) against $\Xi(z)$(Red). It showed that there existed 29 zeros for both $\Xi_{P}(z)$ and $\Xi(z)$. 
%
\begin{figure}[H]
\centering
\includegraphics[scale=0.8,keepaspectratio]{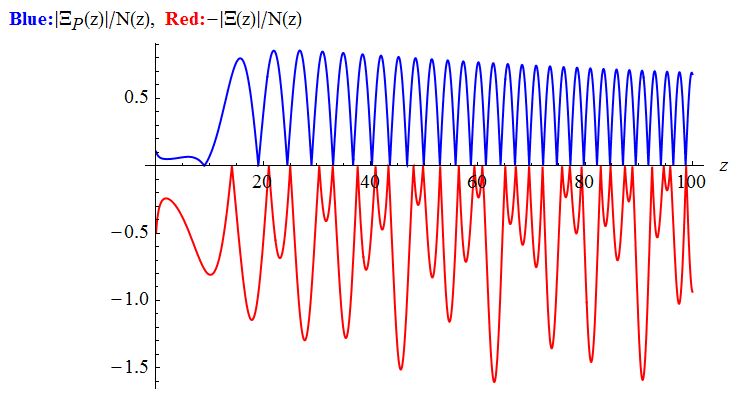}
\caption{Plots of various Xi functions vs. $z$. This includes: $|\Xi_{P}(z)|/N(z)$(Blue); $-|\Xi(z)|/N(z)$(Red). Here and after $N(z)=\exp(-\pi z/4)(z+1)^2$ is a scale normalization function.}
\end{figure}
In Figures 3 below we compare $\Xi_{P2}(z)$(Purple) against $\Xi(z)$(Red). It showed that there existed 29 zeros for both $\Xi_{P2}(z)$ and $\Xi(z)$. 
%
\begin{figure}[H]
\centering
\includegraphics[scale=0.8,keepaspectratio]{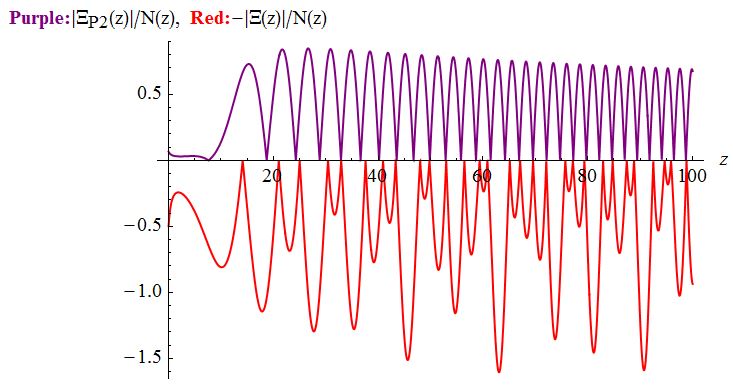}
\caption{Plots of various Xi functions vs. $z$. This includes: $|\Xi_{P2}(z)|/N(z)$(Purple); $-|\Xi(z)|/N(z)$(Red).}
\end{figure}
In Figures 4 below we compare $\Xi_{dB}(z)$(Brown) against $\Xi(z)$(Red). It showed that there existed 29 zeros for both $\Xi_{dB}(z)$ and $\Xi(z)$. 
%
\begin{figure}[H]
\centering
\includegraphics[scale=0.8,keepaspectratio]{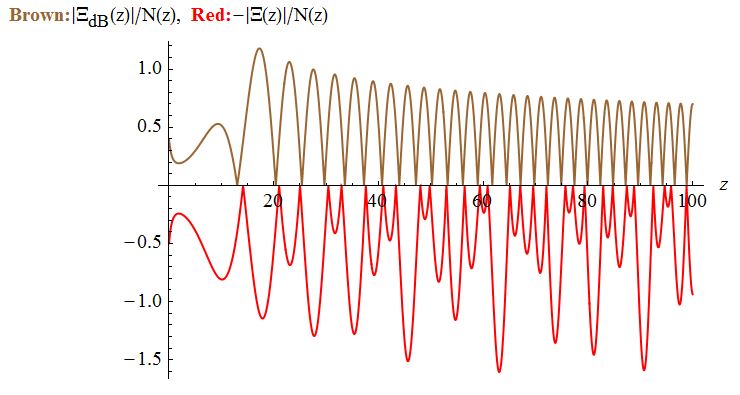}
\caption{Plots of various Xi functions vs. $z$. This includes: $|\Xi_{dB}(z)|/N(z)$(Brown); $-|\Xi(z)|/N(z)$(Red).}
\end{figure}
In Figures 5 below we compare $\Xi_{H,1}(z)$(Green) and $\Xi_{H,4}(z)$(Blue) against $\Xi(z)$(Red). It showed that there existed 29 zeros for $\Xi_{H,1}(z)$, $\Xi_{H,4}(z)$, and $\Xi(z)$. 
%
\begin{figure}[H]
\centering
\includegraphics[scale=0.8,keepaspectratio]{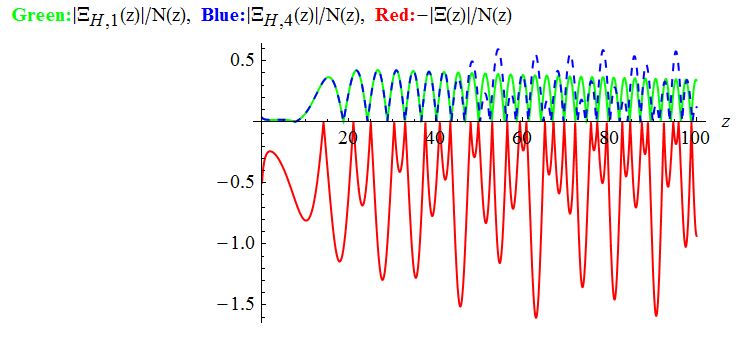}
\caption{Plots of various Xi functions vs. $z$. This includes: $|\Xi_{H,1}(z)|/N(z)$(Green),$|\Xi_{H,4}(z)|/N(z)$(Blue);, $-|\Xi(z)|/|N(z)|$(Red).}
\end{figure}

\section{Our Apprixmiations to the kernel $\Phi(t)$}

%
%
Let $\lambda>0,\overline{\alpha}_{-n}=\alpha_n$ and 
\begin{equation}
f(t)=\exp(-\lambda \cosh t)\sum_{n=-N}^{N}\alpha_n \exp(nt),
\end{equation}

\begin{equation}\label{Psiz}
\Psi(z)=\int_{-\infty}^{\infty}f(t)\exp(izt)\mathrm{d}t.
\end{equation}

de Bruijn proved that $\Psi(z)$ of \eqref{Psiz} has $N$ pair of non-real zeros at most ~\cite{4}(Theorem 21). de Bruijn commented that function $\Psi(z)$ may be of some interest since the Riemann Xi-function can be approximated by functions of this type.

We would like to point out that with $a_{-n}=a_{n}\in \R,\lambda=2\pi$, when $t\to\infty$, because $N$ is a positive integer, $\cosh(Nt)\not =\cosh(9t/4)$, so
\begin{equation}
f(t)\to 2a_N\cosh(Nt)\exp\left(-2\pi \cosh t\right)\not =4\pi^2\cosh(9t/4)\exp\left(-2\pi \cosh t\right)=\Phi_{P}(t)
\end{equation}
Thus $f(t)$ does not have the proper behavior near $t\to \infty$. But we can remedy this problem.
Let $\lambda=2\pi,\alpha_{-n}=\alpha_n\in\R,0\leqslant\beta_{-n}=\beta_n<1$ and 
\begin{equation}\label{FdB}
K(t)=\Phi_P(t)+\exp(-2\pi \cosh t)\sum_{n=-N}^{N}\alpha_n \exp(9\beta_n t/4).
\end{equation}
So when $t\to\infty$,$K(t)\to \Phi_{P}(t)$.
Thus criteria (i)$K(t)\to \Phi(t),t\to \infty$, mentioned in the introduction, is satisfied. 
The actual values of parameters $\alpha_n$ and $\beta_n$ are then used to satisfy the other two criteria; namely (ii) $K(0)=\Phi(0)$, (iii) $\int_0^{\infty}K(t)\cos(z t)\mathrm{d}t$ has only real zeros.

In theory one can also use the following $K(t)$ to approximate $\Phi(t)$.
\begin{equation}\label{FdBint}
\aligned
K(t)=\Phi_P(t)&+\exp(-2\pi \cosh t)\sum_{n=-N}^{N}\alpha_n \exp(9\beta_n t/4)\\
&+\exp(-2\pi \cosh t)\int_{-A}^{A}\gamma(\mu) \exp(9\delta(\mu) t/4)\mathrm{d}t.
\endaligned
\end{equation}
Where $A>0,\mu\in\R,\gamma(-\mu)=\gamma(\mu)\in\R,\delta(\mu)=\delta(\mu)\in\R$.

In all of our approximations below, we will use the $K(t)$ of the type \eqref{FdB} to approximate $\Phi(t)$.

Let
\begin{equation}
\Phi(0)=\theta^{''}(1)+(3/2)\theta^{'}(1)\approx 0.446696
\end{equation}
where $\theta(x)$ is defined in \eqref{thetay}.

We first approximate $\Phi(t)$ with $\Phi_{S}(t)$.
\begin{theorem}
Let 
\begin{equation}\label{PhiSm}
\Phi_{S}(m;t)=4\pi^2 f_m(t)\exp\left(-2\pi \cosh t\right),
\end{equation}
\begin{equation}
f_m(t)=\cosh\left(9t/4\right)+b \sum_{k=0}^{m-1}b_k \cosh\left(9kt/(4m)\right),
\end{equation}
where $0<b_k=\frac{k+1}{m+1}<1, k=0,1,\dots m-1$.
\noindent The Fourier transform of $\Phi_{S}(m;t)$ is:
\begin{equation}\label{XiSm}
\aligned
\Xi_{S}(m;2z)=4\pi^2\left(G_{2\pi}(iz+9/4)+G_{2\pi}(iz-9/4)\right)\\
+4\pi^2 b \sum_{k=0}^{m-1}b_k(G_{2\pi}(iz+9k/(4m))+G_{2\pi}(iz-9k/(4m))),
\endaligned
\end{equation}
where $G_{a}(z):=K_{z}(a)$.

If $m\ge 11$ and $b$ is determined by 
\begin{equation}\label{bm}
b m =2\beta,\qquad \beta:=(4\pi^2)^{-1}(e^{2\pi}\Phi(0)-1)\approx 5.059069
\end{equation}
\noindent then 

(A)$\Phi_{S}(m;t)\to \Phi(t),\text{when}\quad t\to\infty$;

(B)$\Phi_{S}(m;0)= \Phi(0)$; 

(C)the entire function $\Xi_{S}(m;2z)$ has only real zeros.
\end{theorem}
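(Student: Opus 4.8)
The plan is to dispatch (A) and (B) by direct asymptotics and evaluation at a point, and to reduce (C) to the observation that $f_m$ is itself a P\'olya universal factor, so that the real-rootedness of $\Xi_S(m;2z)$ follows by applying \cref{ufactor} to the base kernel $e^{-2\pi\cosh t}$. For (A), I would note that each summand $\cosh(9kt/(4m))$ carries exponent $9k/(4m)<9/4$ for $0\le k\le m-1$, so as $t\to\infty$ every sum term is exponentially dominated by the leading $\cosh(9t/4)$; hence $f_m(t)\sim\cosh(9t/4)$ and $\Phi_S(m;t)\sim 4\pi^2\cosh(9t/4)e^{-2\pi\cosh t}=\Phi_P(t)$, which gives (A) because $\Phi(t)\sim\Phi_P(t)$ as $t\to\infty$ by Section 3. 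For (B), evaluating at $t=0$ gives $f_m(0)=1+b\sum_{k=0}^{m-1}b_k$; since $\sum_{k=0}^{m-1}(k+1)=m(m+1)/2$ one has $\sum_{k=0}^{m-1}b_k=m/2$, so $f_m(0)=1+bm/2=1+\beta$ by \eqref{bm}, and the value of $\beta$ in \eqref{bm} is chosen precisely so that $\Phi_S(m;0)=4\pi^2(1+\beta)e^{-2\pi}=\Phi(0)$.

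The heart of the matter is (C). First I would rewrite $f_m(t)=\sum_{k=0}^{m}a_k\cosh(9kt/(4m))$ with $a_k=b\,b_k=b(k+1)/(m+1)$ for $0\le k\le m-1$ and $a_m=1$. The decisive point is that $m\ge 11$ forces $0<b=2\beta/m<1$, which makes these coefficients strictly increasing and positive: $a_0=b/(m+1)>0$, the values $a_0<\cdots<a_{m-1}$ grow linearly in $k$, and $a_{m-1}=bm/(m+1)<m/(m+1)<1=a_m$. Substituting $t\mapsto iz$ converts every hyperbolic cosine into an ordinary cosine, so $f_m(iz)=A(9z/(4m))$ where $A(\alpha)=\sum_{k=0}^{m}a_k\cos(k\alpha)$. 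By the corollary of \cref{Enestrom_Kakeya} and \cref{Hermite_Biehler} recorded in \eqref{eqpcos}, the increasing-coefficient condition yields $A(\alpha)\in\mathcal{LP}$, and because $\mathcal{LP}$ is closed under the real rescaling $\alpha=9z/(4m)$ of the variable, we conclude $f_m(iz)\in\mathcal{LP}$.

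With $f_m(iz)\in\mathcal{LP}$ established, \cref{ufactor} shows that $f_m(t)$ is a universal factor. The base kernel $K(t)=e^{-2\pi\cosh t}$ is even, real-valued, integrable, and decays like $e^{-\pi e^{|t|}}$, so it satisfies P\'olya's growth hypothesis, and its Fourier transform equals $2K_{iz}(2\pi)\in\mathcal{LP}$ by \cref{Kiz}. Applying the universal factor $f_m$ to $K$ therefore places $\int_{-\infty}^{\infty}f_m(t)e^{-2\pi\cosh t}e^{izt}\,\mathrm{d}t$ in $\mathcal{LP}$; this integral is exactly $\Xi_S(m;2z)/(4\pi^2)$ as displayed in \eqref{XiSm}, and multiplying by the positive constant $4\pi^2$ preserves $\mathcal{LP}$, which gives (C).

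I expect the main obstacle to be pinning down the precise hypotheses that force the strict chain $0<a_0<\cdots<a_m$, since this is the single place where the threshold $m\ge 11$ (equivalently $b<1$) is consumed and where the Eneström--Kakeya hypothesis is invoked; everything downstream is a formal application of the universal-factor machinery. A secondary point requiring a line of justification is that the product $f_m(t)e^{-2\pi\cosh t}$ is absolutely integrable, so that the Fourier transform and the universal-factor conclusion are legitimate; this holds because the super-exponential decay of $e^{-2\pi\cosh t}$ overwhelms the $e^{9t/4}$ growth of $f_m$.
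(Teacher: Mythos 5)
Your proposal is correct and follows essentially the same route as the paper's own proof: (A) and (B) by dominance of the leading $\cosh(9t/4)$ term and evaluation at $t=0$, and (C) by reducing to $f_m(it)\in\mathcal{LP}$ via the Enestr\"{o}m--Kakeya/Hermite--Biehler corollary \eqref{eqpcos} (using $m\ge 11\Rightarrow 0<b<1$ to get the strictly increasing coefficient chain) and then invoking the universal factor theorem together with \cref{Kiz}. You merely supply more detail than the paper at each step, including the explicit check $f_m(0)=1+\beta$ and the integrability of the kernel.
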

\begin{proof}
Since $0<\frac{k}{m}<1$, $f_m(t)\to \cos(9t/4)$ and $\Phi_{S}(m;t)\to \Phi(t)$ when $t\to\infty$. Thus we proved (A). Setting $\Phi_{S}(m;0)=\Phi(0)$ leads to \eqref{bm}, thus we proved (B).
 
Becuase of \cref{Kiz},it suffice to prove that $f_m(t)$ is a universal factor, or $f_m(it)\in \mathcal{LP}$. Defining $x=\frac{9t}{4m}$, then we obtain:
\begin{equation}
f_m(it)=\tilde{f}(x)=\cos(mx)+b \sum_{k=0}^{m-1}b_k \cos(kx),
\end{equation}
Comparing $\tilde{f}(x)$ with $f_c(x)$ of \eqref{eqpcos} and realizing that $0<b_k<b_{k+1}<1$ we conclude that it is now suffice to prove that $0<b<1$. If we pick an integer $m\ge 11$ in \eqref{bm}, then $0<b<1$. This proved (C).  
\end{proof}
\noindent Figure 6 below showed comparison of $\Phi(t)$(Red) with $\Phi_S(m=11;t)$(Green), and $\Phi_S(m=100;t)$(Blue). 
%
\begin{figure}[H]
\centering
\includegraphics[scale=0.8,keepaspectratio]{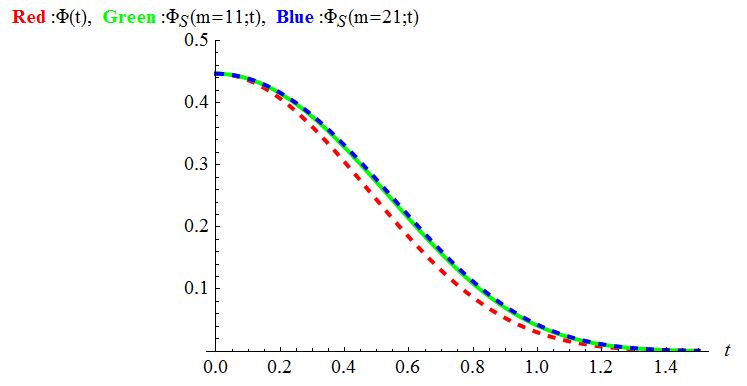}
\caption{Plots of various Phi functions vs. $t$. This includes $\Phi(t)$(Red), $\Phi_S(m=11;t)$ $(b=0.919830)$ (Green), $\Phi_S(m=100;t)$ $(b=0.101181)$ (Blue). }\label{figure2}
\end{figure}
\noindent To quantify the goodness of the approximation, we define and numerically calculate the following relative differences in percentage: 
\begin{equation}
\frac{\int_0^{\infty}|\Phi(t)-\Phi_S(m=11;t)|\mathrm{d}t}{\int_0^{\infty}\Phi(t)\mathrm{d}u}\approx 7.949691 \%
\end{equation}
\begin{equation}
\frac{\int_0^{\infty}|\Phi(t)-\Phi_S(m=100;t)|\mathrm{d}t}{\int_0^{\infty}\Phi(t)\mathrm{d}u}\approx 9.091720 \%
\end{equation}

In Figure 7 below we compare $\Xi_{S}(m=11;z)$(Purple) against $\Xi(z)$(Red). It showed that there existed 29 zeros for both $\Xi_{S}(m=11;z)$ and $\Xi(z)$.
%
\begin{figure}[H]
\centering
\includegraphics[scale=0.8,keepaspectratio]{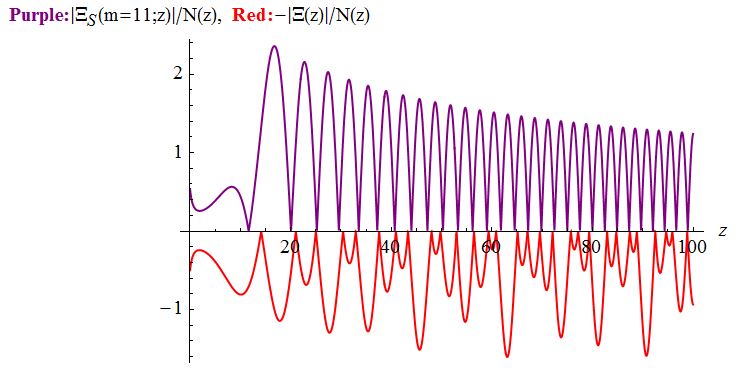}
\caption{Plots of various Xi functions vs. $z$. This includes: $|\Xi_{S}(m=11;z)|/|N(z)|$(Purple); $-|\Xi(z)|/|N(z)|$(Red).}
\end{figure}
In Figure 8 below we compare $\Xi_{S}(m=21;z)$(Blue) against $\Xi(z)$(Red). It showed that there existed 29 zeros for both $\Xi_{S}(m=21;z)$ and $\Xi(z)$.
\begin{figure}[H]
%
\centering
\includegraphics[scale=0.8,keepaspectratio]{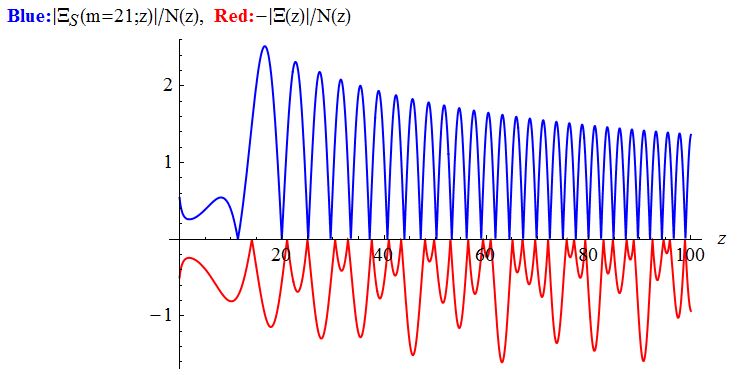}
\caption{Plots of various Xi functions vs. $z$. This includes: $|\Xi_{S}(m=21;z)|/N(z)$(Blue); $-|\Xi(z)|/N(z)$(Red).}
\end{figure}
%
%
%
%
\indent We next approximate $\Phi(t)$ with $\Phi_{S2}(m,a;t)$.
\begin{theorem}
Let
\begin{equation}\label{PhiS2m}
\Phi_{S2}(m,a;t)=4\pi^2 g(m,a;u)\exp\left(-2\pi \cosh t\right),
\end{equation}
\begin{equation}
g(m,a;u)=\cosh\left(9t/4\right)+c \sum_{k=0}^{m-1}c_k(a) \cosh\left(9kt/(4m)\right),
\end{equation}
where $0<c_k(a)=1-a^{m+1}<1,0<a<1,\qquad k=0,1,\dots m-1$.

The Fourier transform of $\Phi_{S2}(m,a,t)$ is:
\begin{equation}\label{XiS2m}
\aligned
\Xi_{S2}(m,a;2z)&=4\pi^2\left(G_{2\pi}(iz+9/4)
+G_{2\pi}(iz-9/4)\right)\\
&+4\pi^2 c \sum_{k=0}^{m-1}c_k(a)(G_{2\pi}(iz+9k/(4m))
+G_{2\pi}(iz-9k/(4m))),
\endaligned
\end{equation}

For a given parameter $0<a<1$, if $\mu,m$ satisfies the equations: 
\begin{equation}\label{mu}
\mu (1-a^\mu)=\beta,
\end{equation}
\begin{equation}\label{mceil}
m \ge \lceil\mu\rceil,
\end{equation}
where the constant $\beta$ is defined in \eqref{bm},
Then 

(A)$\Phi_{S2}(m,a;t)\to \Phi(t),\text{ when}\quad t\to\infty$;

(B)$\Phi_{S2}(m,a;0)= \Phi(0)$; 

(C)the entire function $\Xi_{S2}(m,a;z)$ has only real zeros.
\end{theorem}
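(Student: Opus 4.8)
The plan is to follow the template of the previous theorem almost verbatim, the only genuinely new work being the verification that the resulting coefficient $c$ lies in $(0,1)$ and the handling of the associated cosine polynomial. As there, claims (A) and (B) are immediate. For (A): each frequency $9k/(4m)$ with $0\le k\le m-1$ is strictly less than $9/4$, so the term $\cosh(9t/4)$ dominates the finite sum as $t\to\infty$, giving $g(m,a;t)\to\cosh(9t/4)$ and hence $\Phi_{S2}(m,a;t)\to\Phi_P(t)$, which is exactly the tail of $\Phi(t)$. For (B): evaluating \eqref{PhiS2m} at $t=0$ and repeating the computation of the previous theorem, now with $\sum_{k=0}^{m-1}c_k(a)=m(1-a^{m+1})$ in place of $\sum_{k=0}^{m-1}b_k=m/2$, turns the normalization $\Phi_{S2}(m,a;0)=\Phi(0)$ into $c\,m(1-a^{m+1})=\beta$ with $\beta$ as in \eqref{bm}. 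This fixes $c=\beta/\bigl(m(1-a^{m+1})\bigr)$ and, crucially, makes the common inner coefficient $d:=c\,c_k(a)=\beta/m$ independent of $k$.

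The substance is (C). By \cref{Kiz} the Fourier transform of $e^{-2\pi\cosh t}$ equals $2K_{iz}(2\pi)\in\mathcal{LP}$, so by \cref{ufactor} it suffices to show that $g(m,a;\cdot)$ is a universal factor, i.e. that $g(m,a;it)\in\mathcal{LP}$. Setting $x=9t/(4m)$ (so $9t/4=mx$ and $9kt/(4m)=kx$) converts this into the requirement
\[
\tilde g(x)=\cos(mx)+d\sum_{k=0}^{m-1}\cos(kx)\in\mathcal{LP},\qquad d=\frac{\beta}{m}.
\]
The key estimate is $0<d<1$, and this is precisely what conditions \eqref{mu} and \eqref{mceil} provide. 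A one-line calculus check shows that $x\mapsto x(1-a^x)$ is strictly increasing on $(0,\infty)$ (its derivative is $1-a^x(1+x\log a)>0$ for $x>0$, since $e^{w}(1+w)<1$ for every $w=x\log a<0$), so the equation $\mu(1-a^\mu)=\beta$ has a unique positive root $\mu$, and $1-a^\mu<1$ forces $\mu>\beta$. Monotonicity together with \eqref{mceil} then yields $m(1-a^m)\ge\mu(1-a^\mu)=\beta$, whence $d=\beta/m\le 1-a^m<1$, while $d>0$ is clear; the same inequality gives $c=\beta/(m(1-a^{m+1}))<1$.

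It remains to pass from $0<d<1$ to the reality of the zeros of $\tilde g$, and this is the one place where the quoted results do not apply verbatim: the coefficients of the associated polynomial $p(z)=z^m+d(z^{m-1}+\cdots+z+1)$ are $d,\dots,d,1$, which are only non-strictly increasing, so neither \cref{Enestrom_Kakeya} nor \eqref{eqpcos} can be invoked directly. I expect this to be the main obstacle. The efficient remedy is to locate the zeros of $p$ by hand: clearing denominators gives $(z-1)p(z)=z^{m+1}+(d-1)z^m-d$, and for a unimodular $z=e^{i\theta}$ the relation $z^m(z+d-1)=d$ forces $|z+d-1|^2=d^2$, which simplifies to $2(1-d)(1-\cos\theta)=0$; since $0<d<1$ the only root on $\lvert z\rvert=1$ is the spurious $z=1$ (cancelled by the factor $z-1$), so all $m$ zeros of $p$ lie in the open disc $\lvert z\rvert<1$. \cref{Hermite_Biehler} then applies and gives $\tilde g=\operatorname{Re}p(e^{ix})\in\mathcal{LP}$; alternatively one perturbs $d,\dots,d$ to a strictly increasing sequence, invokes \eqref{eqpcos}, and passes to the limit by Hurwitz's theorem. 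Either route shows $g(m,a;it)\in\mathcal{LP}$, so $g(m,a;\cdot)$ is a universal factor, and combining this with \cref{Kiz} through \cref{ufactor} proves that $\Xi_{S2}(m,a;z)$ of \eqref{XiS2m} has only real zeros.
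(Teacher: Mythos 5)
Your argument is essentially sound, but it diverges from the paper's proof at one decisive point: the reading of the coefficients $c_k(a)$. The paper's displayed formula $c_k(a)=1-a^{m+1}$ is a typo for $c_k(a)=1-a^{k+1}$; this is unambiguous from the paper's own proof, which computes $c=\beta/(m-\sum_{k=1}^{m}a^k)$ in \eqref{cm} (consistent only with $\sum_{k=0}^{m-1}c_k(a)=m-\sum_{k=1}^{m}a^k$) and explicitly invokes $0<c_k(a)<c_{k+1}(a)<1$. With the intended coefficients the sequence $c\,c_0<c\,c_1<\cdots<c\,c_{m-1}<1$ is \emph{strictly} increasing, so the paper finishes exactly as in Theorem~1 by citing \eqref{eqpcos}, and the equal-coefficient difficulty you identify never arises. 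Because you took the constant reading, you were led to the polynomial $z^m+d(z^{m-1}+\cdots+1)$ and had to supply a non-strict Enestr\"{o}m--Kakeya step yourself. That supplement is correct in substance, with one caveat: the computation on $|z|=1$ by itself only shows there are no zeros \emph{on} the circle, not that the zeros lie inside it, so the perturbation-plus-Hurwitz argument (which places them in the closed disk) is not an alternative but the other half of the proof; the two observations must be combined before \cref{Hermite_Biehler} applies. Finally, your verification that $0<c<1$ via the strict monotonicity of $x\mapsto x(1-a^x)$ together with \eqref{mu} and \eqref{mceil} is cleaner than the paper's and in fact repairs a genuine slip there: the paper tries to deduce $m-\sum_{k=1}^{m}a^k>\beta$ from the pair of inequalities $m(1-a^m)>m-\sum_{k=1}^{m}a^k$ and $m(1-a^m)\geq\beta$ in \eqref{mB3}, which does not follow since the first inequality points the wrong way; your chain $m\geq\lceil\mu\rceil\geq\mu$ and $\mu(1-a^\mu)=\beta$ gives the needed bound directly (and an analogous direct lower bound on $\sum_{k=1}^{m}(1-a^k)$ is what the paper's version actually requires).
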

\begin{proof}
The proof is similar to that for theorem 1.
Since $0<\frac{k}{m}<1$, $g(m,a;t)\to \cos(9t/4)$ and $\Phi_{S2}(m;t)\to \Phi(t)$ when $t\to\infty$. Thus we proved (A). Setting $\Phi_{S2}(m,a;0)=\Phi(0)$ leads:
\begin{equation}\label{cm}
c =\frac{(e^{2\pi}\Phi(0)-4\pi^2)}{4\pi^2}\frac{(1-a)}{(m(1-a)-a(1-a^m))}=
\frac{\beta}{m-\sum_{k=1}^{m}a^k}>0.
\end{equation}

\noindent If we determined $c$ from \eqref{cm}, then we proved (B).

Becuase of \cref{Kiz},it suffice to prove that $g(m,a;t)$ is a universal factor, or $g(m,a;it)\in \mathcal{LP}$. Defining $x=\frac{9t}{4m}$, then we obtain:
\begin{equation}
g(m,a;it)=\tilde{g}(x)=\cos(mx)+c \sum_{k=0}^{m-1}c_k \cos(kx),
\end{equation}
Comparing $\tilde{g}(x)$ with $f_c(x)$ of \eqref{eqpcos} and realizing that $0<c_k(a)<c_{k+1}(a)<1$ we conclude that it is now suffice to prove that $0<c<1$. 

If we want that $c<1$, then we need to require
\begin{equation}\label{mB2}
m-\sum_{k=1}^{m}a^k>\beta
\end{equation}

\noindent Since $0<a<1$, we have $\sum_{k=1}^{m}a^k>ma^m$. Thus we may require $m$ to satisfy

\begin{equation}\label{mB3}
m(1-a^m) >m-\sum_{k=1}^{m}a^k>\beta
\end{equation}

If we select $m$ that satisfies \eqref{mceil}, then \eqref{mB3} and \eqref{mB2} are satisfied. Thus the parameter $c$ determined by \eqref{cm} satisfies $0<c<1$. Thus we proved (C).    
\end{proof}
\noindent Figure 9 below showed comparison of $\Phi(t)$(Red) with $\Phi_{S2}(m=6,a=1/100;t)$(Green), and $\Phi_{S2}(m=7,a=1/2;t)$(Blue). 
%
\begin{figure}[H]
\centering
\includegraphics[scale=0.8,keepaspectratio]{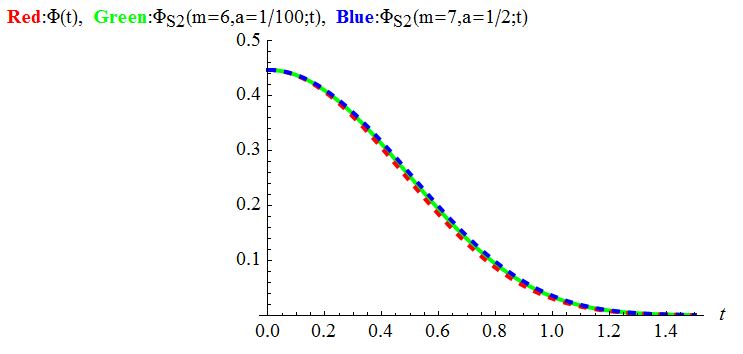}
\caption{Plots of various Phi functions vs. $t$. This includes $\Phi(t)$(Red), $\Phi_S(m=6,a=1/100;t)$ $(b=0.844600)$(Green), $\Phi_S(m=7,a=1/2;t)$ $(b=0.842081)$(Blue). }\label{figure3}
\end{figure}
\noindent The relative differences in percentage: 
\begin{equation}
\frac{\int_0^{\infty}|\Phi(t)-\Phi_{S2}(m=6,a=1/100;t)|\mathrm{d}t}{\int_0^{\infty}\Phi(t)\mathrm{d}u}\approx 2.497861 \%
\end{equation}
\begin{equation}
\frac{\int_0^{\infty}|\Phi(t)-\Phi_{S2}(m=7,a=1/2;t)|\mathrm{d}t}{\int_0^{\infty}\Phi(t)\mathrm{d}t}\approx 3.916332 \%
\end{equation}
In Figure 10 below we compare $\Xi_{S2}(m=6,a=1/100;z)$(Purple) against $\Xi(z)$(Red). It showed that there existed 29 zeros for both $\Xi_{S2}(m=6.a=1/100;z)$ and $\Xi(z)$.
%
\begin{figure}[H]
\centering
\includegraphics[scale=0.8,keepaspectratio]{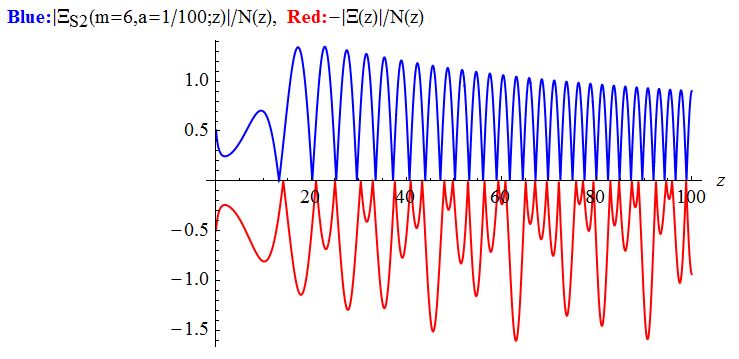}
\caption{Plots of various Xi functions vs. $z$. This includes: $|\Xi_{S2}(m=6,a=1/100;z)|/N(z)$(Purple); $-|\Xi(z)|/N(z)$(Red).}
\end{figure}
In Figure 11 below we compare $\Xi_{S2}(m=7,a=1/2;z)$(Blue) against $\Xi(z)$(Red). It showed that there existed 29 zeros for both $\Xi_{S2}(m=7,a=1/2;z)$ and $\Xi(z)$.
\begin{figure}[H]
%
\centering
\includegraphics[scale=0.8,keepaspectratio]{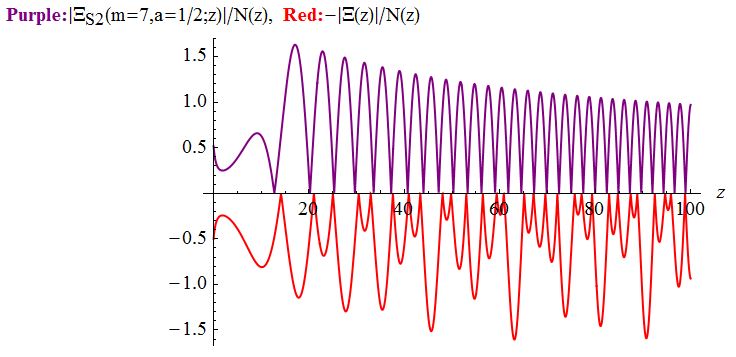}
\caption{Plots of various Xi functions vs. $z$. This includes: $|\Xi_{S2}(m=7,a=1/2;z)|/N(z)$(Blue); $-|\Xi(z)|/N(z)$(Red).}
\end{figure}
%
%
%
%
%
\begin{lemma}~\cite{11}
Let $k\in\N$, then 
\begin{equation}
\sinh^{2k}(x)=\sum_{j=0}^k a_{k,j} \cosh(2jx)
\end{equation}
\noindent where $a_{k,0} = (-1)^k 2^{-2k}\binom {2k}{k}, \quad a_{k,k} = 2^{1-2k}$,
$a_{k,j} = (-1)^{k-j} 2^{1-2k}\binom{2k}{k-j},\quad 0<j<k$.
\end{lemma}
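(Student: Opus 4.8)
The plan is to reduce the identity to a single binomial expansion in exponential form and then pair conjugate exponentials into hyperbolic cosines. First I would write $\sinh(x)=\tfrac{1}{2}(e^{x}-e^{-x})$ and apply the binomial theorem:
\begin{equation}
\sinh^{2k}(x)=2^{-2k}(e^{x}-e^{-x})^{2k}=2^{-2k}\sum_{\ell=0}^{2k}\binom{2k}{\ell}(-1)^{\ell}e^{(2k-2\ell)x}.
\end{equation}
The exponents $2k-2\ell$ are precisely the even integers from $2k$ down to $-2k$, which is exactly the form needed, so the whole proof is really just a rearrangement of this one sum.

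Next I would reindex by setting $j=k-\ell$, so that $2k-2\ell=2j$ and $\ell=k-j$ ranges over $j=-k,\dots,k$, giving
\begin{equation}
\sinh^{2k}(x)=2^{-2k}\sum_{j=-k}^{k}(-1)^{k-j}\binom{2k}{k-j}e^{2jx}.
\end{equation}
The key observation I would then use is that the coefficient of $e^{2jx}$ is invariant under $j\mapsto-j$: indeed $\binom{2k}{k-j}=\binom{2k}{k+j}$ by the symmetry of binomial coefficients, and $(-1)^{k-j}=(-1)^{k+j}$ because the two exponents differ by the even number $2j$. I would therefore fold the $j$ and $-j$ terms together for $j\ge 1$ into $e^{2jx}+e^{-2jx}=2\cosh(2jx)$, while keeping the unpaired central term $j=0$ separate, which yields
\begin{equation}
\sinh^{2k}(x)=(-1)^{k}2^{-2k}\binom{2k}{k}+\sum_{j=1}^{k}(-1)^{k-j}2^{1-2k}\binom{2k}{k-j}\cosh(2jx).
\end{equation}

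Finally I would read off the coefficients and check the three stated cases: the constant term gives $a_{k,0}=(-1)^{k}2^{-2k}\binom{2k}{k}$; for $0<j<k$ we get $a_{k,j}=(-1)^{k-j}2^{1-2k}\binom{2k}{k-j}$; and at $j=k$ the same formula specializes to $a_{k,k}=2^{1-2k}\binom{2k}{0}=2^{1-2k}$, matching the claim. The computation is essentially routine; the only point requiring genuine care is the pairing step, where one must use the parity fact $(-1)^{k-j}=(-1)^{k+j}$ to combine the two halves of the sum into cosines and must avoid double-counting the central $j=0$ term, which has no partner. I expect no real obstacle beyond this bookkeeping.
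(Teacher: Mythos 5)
Your proof is correct. The paper offers no proof of this lemma at all---it is simply quoted from the cited reference---so there is nothing to compare against; your argument (binomial expansion of $2^{-2k}(e^{x}-e^{-x})^{2k}$, reindexing by $j=k-\ell$, and folding the $\pm j$ terms into $2\cosh(2jx)$ using $\binom{2k}{k-j}=\binom{2k}{k+j}$ and $(-1)^{k-j}=(-1)^{k+j}$, with the unpaired $j=0$ term kept separate) is the standard one, and all three coefficient cases check out, including the specialization $\binom{2k}{0}=1$ at $j=k$.
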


\begin{lemma}
Let $m\in\N$, then  
\begin{equation}
\aligned
h_{m}(t)&=4^m\cosh(t/4)(\sinh^2(t/m)+1-a^2)^{m}\\
&=\sum_{j=0}^{m}b_{m,j}(\cosh(2jt/m+t/4)+\cosh(2jt/m-t/4)).
\endaligned
\end{equation}
\noindent where
\begin{equation}
b_{m,j}=\frac{1}{2}4^m\sum_{k=j}^m\binom{m}{k}(1-a^2)^{m-k}a_{k,j} 
\end{equation}
\end{lemma}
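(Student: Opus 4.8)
The plan is to expand $h_m(t)$ from the inside out, reducing everything to the single-variable identity established in the preceding lemma. First I would apply the binomial theorem to the $m$-th power,
\begin{equation}
\left(\sinh^2(t/m)+1-a^2\right)^m=\sum_{k=0}^m\binom{m}{k}(1-a^2)^{m-k}\sinh^{2k}(t/m).
\end{equation}
Each factor $\sinh^{2k}(t/m)$ is then rewritten as a $\cosh$-polynomial using the previous lemma with $x=t/m$, namely $\sinh^{2k}(t/m)=\sum_{j=0}^k a_{k,j}\cosh(2jt/m)$. Substituting produces a double sum ranging over the triangle $\{(j,k):0\leqslant j\leqslant k\leqslant m\}$.

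Next I would interchange the order of summation. Since the index set is a finite triangle, the swap is purely formal: fixing $j$ and letting $k$ run from $j$ to $m$ gives
\begin{equation}
\left(\sinh^2(t/m)+1-a^2\right)^m=\sum_{j=0}^m\left(\sum_{k=j}^m\binom{m}{k}(1-a^2)^{m-k}a_{k,j}\right)\cosh(2jt/m).
\end{equation}
The bracketed coefficient is exactly the claimed $b_{m,j}$ up to the factor $\tfrac12\cdot 4^m$ introduced in the next step.

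Finally I would restore the prefactor $4^m\cosh(t/4)$ and collapse each product $\cosh(t/4)\cosh(2jt/m)$ via the hyperbolic product-to-sum identity $\cosh A\cosh B=\tfrac12\left(\cosh(A+B)+\cosh(A-B)\right)$ with $A=2jt/m$ and $B=t/4$. This converts every term into $\tfrac12\left(\cosh(2jt/m+t/4)+\cosh(2jt/m-t/4)\right)$, and reading off the coefficient of this bracket yields $b_{m,j}=\tfrac12\cdot 4^m\sum_{k=j}^m\binom{m}{k}(1-a^2)^{m-k}a_{k,j}$, as claimed.

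There is no genuine analytic obstacle here: all sums are finite, so no convergence or interchange-of-limit issues arise, and the two identities invoked (the binomial expansion and the product-to-sum formula) are elementary. The only points demanding care are the bookkeeping of index ranges when transposing the double sum—ensuring the lower limit becomes $k=j$ rather than $k=0$—and the consistent tracking of the constant factors $4^m$ and $\tfrac12$ through the product-to-sum step.
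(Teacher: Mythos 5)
Your proposal is correct and follows essentially the same route as the paper's own proof: binomial expansion of the $m$-th power, the preceding lemma to convert $\sinh^{2k}(t/m)$ into a $\cosh$-sum, the product-to-sum identity for $\cosh(t/4)\cosh(2jt/m)$, and a transposition of the finite double sum to obtain the lower limit $k=j$. The only difference is the (immaterial) order in which the sum interchange and the product-to-sum step are performed.
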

\begin{proof}

\begin{equation}
\aligned
h_{m}(t)
&=4^m\cosh(t/4)(\cosh^2(t/m)-a^2)^{m}\\
&=4^m\cosh(t/4)(\sinh^2(t/m)+1-a^2)^{m}\\
&=4^m\cosh(t/4)\sum_{k=0}^{m}\binom{m}{k}(1-a^2)^{m-k}\sinh^{2k}(t/m)\\
&=4^m\sum_{k=0}^{m}\sum_{j=0}^k\binom{m}{k}(1-a^2)^{m-k} a_{k,j} \cosh(t/4)\cosh(2jt/m)\\
&=\frac{1}{2}4^m\sum_{k=0}^{m}\sum_{j=0}^k\binom{m}{k}(1-a^2)^{m-k}a_{k,j}\\ &\qquad\qquad\qquad\qquad\times\left(\cosh(2jt/m+t/4)+\cosh(2ju/m-t/4)\right)\\
&=\frac{1}{2}4^m\sum_{j=0}^{m}\sum_{k=j}^m\binom{m}{k}(1-a^2)^{m-k}a_{k,j}\\ &\qquad\qquad\qquad\qquad\times\left(\cosh(2jt/m+t/4)+\cosh(2jt/m-t/4)\right)\\
&=\sum_{j=0}^{m}b_{m,j}(\cosh(2jt/m+t/4)+\cosh(2jt/m-t/4)).
\endaligned
\end{equation}
\end{proof}

\indent We next approximate $\Phi(t)$ with $\Phi_{S3}(m;t)$.

\begin{theorem}. Let
\begin{equation}\label{PhiS3m}
\Phi_{S3}(m;t)=2\pi^2 h_{m}(t)\exp\left(-2\pi \cosh t\right),
\end{equation}
\begin{equation}
h_{m}(t)=\cosh(t/4)(4\cosh^2(t/m)-4a^2)^{m},
\end{equation}
where $a\in \R,m\in \N$.

The Fourier transform of $\Phi_{S3}(m;t)$ is:
\begin{equation}\label{XiS3m}
\aligned
\Xi_{S3}(m;2z)&=2\pi^2\sum_{k=0}^{m}b_{m,k}(G_{2\pi}(iz+2k/m+1/4)+G_{2\pi}(iz-2k/m-1/4))\\
&+2\pi^2\sum_{k=0}^{m}b_{m,k}(G_{2\pi}(iz+2k/m-1/4)+G_{2\pi}(iz-2k/m+1/4))
\endaligned
\end{equation}

If parameter $a$ is determined by 

\begin{equation}\label{am}
a =\left(1-\frac{1}{4}(1+\beta)^{1/m}\right)^{1/2}
\end{equation}
where the constant $\beta$ is defined in \eqref{bm},
\noindent then 

(A)$\Phi_{S3}(m;t)\to \Phi(t),\quad \text{when }t\to\infty$;

(B)$\Phi_{S3}(m;0)= \Phi(0)$; 

(C)the Fourier transform of  $\Phi_{S3}(m;t)$ has only real zeros.
\end{theorem}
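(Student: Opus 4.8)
The plan is to follow the template of Theorems 1 and 2: treat (A) and (B) as direct computations and concentrate the real work on (C), which I would derive from P\'olya's universal-factor formalism. The common ground for all three parts is the base kernel $K_0(t)=\exp(-2\pi\cosh t)$. It is even, real, absolutely integrable, and decays faster than $\exp(-|t|^b)$ for every $b>2$, so it is admissible for the formalism; moreover its Fourier transform equals $2K_{iz}(2\pi)$, which lies in $\mathcal{LP}$ by \cref{Kiz}. Writing $\Phi_{S3}(m;t)=2\pi^2\,h_m(t)\,K_0(t)$, the whole matter reduces to showing that $h_m$ is a universal factor for $K_0$.

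For (A) I would extract the $t\to\infty$ asymptotics of $h_m$. Since $4\cosh^2(t/m)-4a^2\sim e^{2t/m}$ and $\cosh(t/4)\sim\tfrac12 e^{t/4}$, one finds $h_m(t)\sim\tfrac12 e^{9t/4}\sim\cosh(9t/4)$; equivalently, in the $\cosh$-expansion of $h_m$ furnished by the preceding lemma the highest-frequency term $b_{m,m}\cosh(9t/4)$ dominates, and $b_{m,m}=1$. Hence $\Phi_{S3}(m;t)$ approaches the leading P\'olya term (up to the normalizing constant) and thereby reproduces the $t\to\infty$ behaviour of $\Phi(t)$ required in (A). For (B) I would evaluate $h_m(0)=(4(1-a^2))^m$, so that $\Phi_{S3}(m;0)=2\pi^2\,(4(1-a^2))^m\,e^{-2\pi}$; imposing $\Phi_{S3}(m;0)=\Phi(0)$ leaves a single equation in $a$, and solving it produces exactly \eqref{am}, with $\beta$ the constant of \eqref{bm}.

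The heart of the argument is (C). By the first part of \cref{ufactor} it suffices to show $h_m(it)\in\mathcal{LP}$; then $h_m$ is a universal factor, so $\Xi_{S3}(m;2z)=2\pi^2\int_{-\infty}^{\infty}h_m(t)K_0(t)e^{izt}\,\mathrm{d}t\in\mathcal{LP}$ and in particular has only real zeros. Passing to the imaginary axis converts the hyperbolic cosines into ordinary cosines and factors the result:
\begin{equation}
h_m(it)=\cos(t/4)\,(4\cos^2(t/m)-4a^2)^m=4^m\cos(t/4)\,(\cos(t/m)-a)^m(\cos(t/m)+a)^m .
\end{equation}
I would first verify from \eqref{am} that the parameter is admissible: $a^2=1-\tfrac14(1+\beta)^{1/m}$ lies in $[0,1)$ exactly when $1+\beta\le 4^m$, i.e.\ for every $m\ge\lceil\log_4(1+\beta)\rceil=2$. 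For such $m$ one has $0\le a<1$, so each shifted cosine $\cos(t/m)\mp a$ has only real zeros and, via $\cos\alpha-\cos\gamma=-2\sin\tfrac{\alpha+\gamma}{2}\sin\tfrac{\alpha-\gamma}{2}$ (with $\cos\gamma=\pm a$), factors into a product of sine functions lying in $\mathcal{LP}$; the factor $\cos(t/4)$ belongs to $\mathcal{LP}$ outright. Since $\mathcal{LP}$ is closed under finite products, $h_m(it)\in\mathcal{LP}$, which completes (C).

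The main obstacle is the admissibility bound $0\le a<1$. In Theorems 1 and 2 positivity was delivered by an explicit monotone coefficient sequence fed into the Enestr\"om--Kakeya/Hermite--Biehler route behind \eqref{eqpcos}; here instead one must ensure that $\cos^2(t/m)-a^2=\tfrac12\cos(2t/m)+(\tfrac12-a^2)$ remains real-rooted, which is precisely the condition $|1-2a^2|\le 1$, i.e.\ $0\le a^2\le1$, and this is what forces the lower restriction on $m$ (absent from the stated hypotheses and to be surfaced in the proof). A secondary point that should be cited rather than reproved is that each phase-shifted sine factor genuinely belongs to $\mathcal{LP}$ and that $\mathcal{LP}$-membership survives the finite product.
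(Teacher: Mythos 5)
Your proposal is correct and follows essentially the same route as the paper: (A) and (B) by direct asymptotics and evaluation at $t=0$, and (C) by invoking P\'{o}lya's universal factor theorem together with \cref{Kiz}, factoring $h_m(it)=\cos(t/4)\left(2\cos(t/m)-2a\right)^m\left(2\cos(t/m)+2a\right)^m$ and reducing everything to $0\le a<1$. The only difference is in how admissibility of $a$ is checked: you derive the condition $1+\beta\le 4^m$ (hence $m\ge 2$) directly from \eqref{am}, whereas the paper verifies $0<a^2<1$ by computing $a^2(2)\approx 0.384621$ and showing $a^2(m)$ increases monotonically to $1^-$ on $[2,\infty)$ --- a lower restriction on $m$ that, as you rightly note, is implicit in the proof but absent from the theorem's stated hypotheses.
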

\begin{proof}
Since
\begin{equation}
\cosh(t/4)\left(4\cosh^2(t/m)-4a^2\right)^{m}\to \cosh(9t/4),\quad \text{when }t\to\infty,
\end{equation}
We proved (A).

\indent Setting $\Phi_{S3}(m;0)=\Phi(0)$ leads to \eqref{am}, thus we proved (B).
 
Becuase of \cref{Kiz},it suffice to prove that $h_{m}(t)$ is a universal factor, or $h_{m}(it)\in \mathcal{LP}$. Since:
\begin{equation}
h_{m}(it)=\tilde{h}(t)=\cos(t/4)\left(2\cos(t/m)-2a\right)^{m}\left(2\cos(t/m)+2a\right)^{m},
\end{equation}
and $\cos(t/4)\in \mathcal{LP}$, it is suffice to prove that $0 < a < 1$.

\noindent When $m=2$, $a^2\approx 0.384621<1$; when $m\to\infty$, $a^2\to 1^{-}$. Since
\begin{equation}
\frac{\mathrm{d}a^2}{\mathrm{d}m}=\frac{(1+\beta)^{1/m}\log(1+\beta)}{4m^2}>0,  
\end{equation}
\noindent $a^2(m)$ is monotonically increasing from $ 0.384621$ to $1^{-}$ when $m$ varies in the range $[2,\infty)$.  Therefore $0<a^2<1$ and $0<a<1$. Thus $\tilde{h}(t)\in \mathcal{LP}$. This proved (C).  
\end{proof}
\noindent Because these Phi functions are so close to each other, we can not readily see their differences in a figure like Figures 1,2,3. So in Figure 4 below we showed comparison of differences: $\Phi_{S3}(m=2;t)-\Phi(t)$,$(b=0.620177)$(Green); $\Phi_{S3}(m=3;t)-\Phi(t)$,$(b=0.737722)$(Blue). 
%
%
\begin{figure}[H]
\centering
\includegraphics[scale=0.8,keepaspectratio]{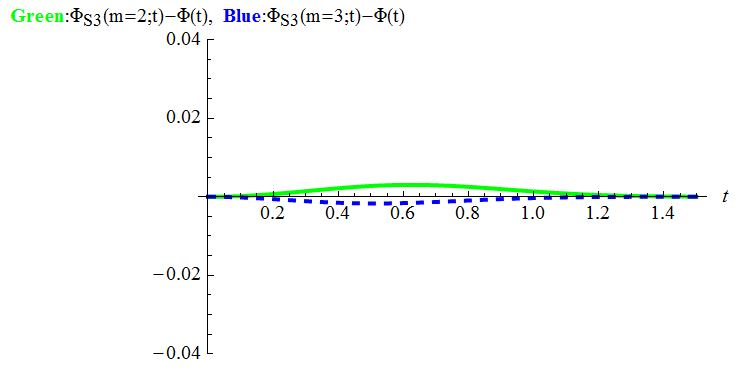}
\caption{Plots of various differences among Phi functions vs. $u$. This includes: $\Phi_{S3}(m=2;t)-\Phi(t)$,$(b=0.620177)$(Green); $\Phi_{S3}(m=3;t)-\Phi(t)$,$(b=0.737722)$(Blue).}\label{figure4}
\end{figure}
\noindent The relative differences in percentage are: 
\begin{equation}
\frac{\int_0^{\infty}|\Phi(t)-\Phi_{S3}(m=2;t)|\mathrm{d}t}{\int_0^{\infty}\Phi(t)\mathrm{d}t}\approx 0.835144 \%
\end{equation}
\begin{equation}
\frac{\int_0^{\infty}|\Phi(t)-\Phi_{S3}(m=3;t)|\mathrm{d}t}{\int_0^{\infty}\Phi(t)\mathrm{d}t}\approx 0.402822 \%
\end{equation}
In Figure 13 below we compare $\Xi_{S3}(m=2;z)$(Blue) against $\Xi(z)$(Red). It showed that there existed 29 zeros for both $\Xi_{S3}(m=2;z)$ and $\Xi(z)$.
\begin{figure}[H]
%
\centering
\includegraphics[scale=0.8,keepaspectratio]{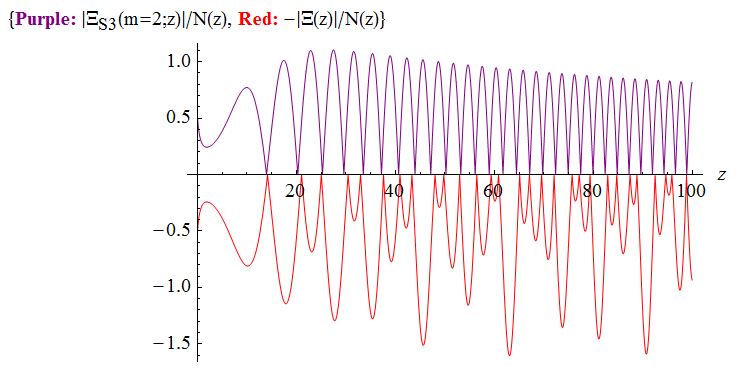}
\caption{Plots of various Xi functions vs. $z$. This includes: $|\Xi_{S3}(m=2;z)|/|N(z)|$(Blue); $-|\Xi(z)|/|N(z)|$(Red).}
\end{figure}

In Figure 14 below we compare $\Xi_{S3}(m=3;z)$(Blue) against $\Xi(z)$(Red). It showed that there existed 29 zeros for both $\Xi_{S3}(m=3;z)$ and $\Xi(z)$.
%
\begin{figure}[H]
\centering
\includegraphics[scale=0.8,keepaspectratio]{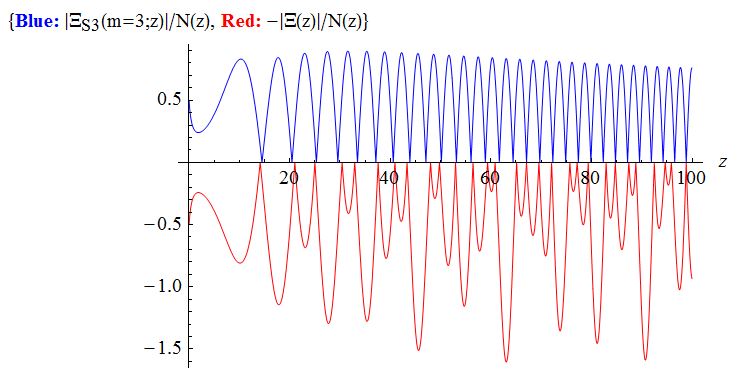}
\caption{Plots of various Xi functions vs. $z$. This includes: $|\Xi_{S3}(m=3;z)|/|N(z)|$(Blue); $-|\Xi(z)|/|N(z)|$(Red).}
\end{figure}

%

%
%

\begin{lemma}
Let $m\in\N$, then  
\begin{equation}
\aligned
j_{m}(t)&=\cosh(t/4)(4\sinh^2(t/(2m))+4a)^{m}(4\sinh^2(t/(2m))+4b)^{m}\\
&=\sum_{j=0}^{m}\sum_{l=0}^{m}d_{m,j,l}p_{m,j,l}(t)
\endaligned
\end{equation}
\noindent where
\begin{equation}
\aligned
d_{m,j,l}&=4^{2m-1}\sum_{k=j}^m\sum_{n=l}^m\binom{m}{k}\binom{m}{n}a^{m-k}b^{m-n} a_{k,j} a_{n,l}\\
p_{m,j,l}(t)&=4\cosh(t/4)\cosh(jt/m)\cosh(lt/m)\\
&=\cos\left(\frac{t}{4}+(+j+l)\frac{t}{m}\right)
+\cos\left(\frac{t}{4}+(+j-l)\frac{t}{m}\right)\\
&+\cos\left(\frac{t}{4}+(-j+l)\frac{t}{m}\right)
+\cos\left(\frac{t}{4}+(-j-l)\frac{t}{m}\right).
\endaligned
\end{equation}
\end{lemma}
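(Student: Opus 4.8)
The plan is to establish this identity by the same computation that proves the preceding lemma for $h_m$, the only new feature being that there are now two binomial factors rather than one. First I would expand each factor by the binomial theorem. Since $(4\sinh^2(t/(2m))+4a)^m=4^m\sum_{k=0}^m\binom{m}{k} a^{m-k}\sinh^{2k}(t/(2m))$ and likewise $(4\sinh^2(t/(2m))+4b)^m=4^m\sum_{n=0}^m\binom{m}{n} b^{m-n}\sinh^{2n}(t/(2m))$, multiplying the two and keeping the prefactor $\cosh(t/4)$ gives
\begin{equation*}
\begin{aligned}
j_m(t)=4^{2m}\cosh(t/4)\sum_{k=0}^m\sum_{n=0}^m\binom{m}{k}\binom{m}{n}\, a^{m-k}b^{m-n}
\sinh^{2k}\!\left(\tfrac{t}{2m}\right)\sinh^{2n}\!\left(\tfrac{t}{2m}\right).
\end{aligned}
\end{equation*}

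Next I would apply the expansion $\sinh^{2k}(x)=\sum_{j=0}^k a_{k,j}\cosh(2jx)$ of the preceding lemma \emph{to each power of $\sinh$ separately}, taking $x=t/(2m)$ so that $2j\,x=jt/m$ and $2l\,x=lt/m$. Crucially the two powers are kept apart rather than merged into a single $\sinh^{2(k+n)}$; this is exactly what produces the double index $(j,l)$ in the final answer. After this substitution every summand carries the triple product $\cosh(t/4)\cosh(jt/m)\cosh(lt/m)$.

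The third step is the product-to-sum identity for three hyperbolic cosines,
\begin{equation*}
\begin{aligned}
4\cosh(t/4)\cosh(jt/m)\cosh(lt/m)
&=\cosh\!\left(\tfrac{t}{4}+\tfrac{(j+l)t}{m}\right)+\cosh\!\left(\tfrac{t}{4}+\tfrac{(j-l)t}{m}\right)\\
&\quad+\cosh\!\left(\tfrac{t}{4}+\tfrac{(l-j)t}{m}\right)+\cosh\!\left(\tfrac{t}{4}-\tfrac{(j+l)t}{m}\right),
\end{aligned}
\end{equation*}
whose right-hand side is $p_{m,j,l}(t)$ (the cosines displayed in the statement are its image under the substitution $t\mapsto it$ that will be used later when checking membership in $\mathcal{LP}$). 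The extra factor $4$ absorbs one power of $4$, turning the prefactor $4^{2m}$ into $4^{2m-1}$.

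Finally I would interchange the order of summation so that $j$ and $l$ become the outer indices: for fixed $j$ the index $k$ runs from $j$ to $m$, and for fixed $l$ the index $n$ runs from $l$ to $m$. Collecting the constants multiplying $p_{m,j,l}(t)$ then yields exactly $d_{m,j,l}=4^{2m-1}\sum_{k=j}^m\sum_{n=l}^m\binom{m}{k}\binom{m}{n} a^{m-k}b^{m-n}a_{k,j}a_{n,l}$, which is the claim. I do not expect any genuine obstacle here: this is a routine two-factor analogue of the preceding lemma, and the only points requiring care are purely bookkeeping — applying the $\sinh^{2k}$ formula with argument $t/(2m)$ (so the doubled index collapses to $jt/m$), keeping the two binomial sums independent rather than convolving them, and tracking the constant through the triple product-to-sum step and the summation interchange to confirm that it is $4^{2m-1}$.
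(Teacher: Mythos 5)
Your proposal is correct and follows essentially the same route as the paper's own proof: binomial expansion of each of the two factors separately, the $\sinh^{2k}(x)=\sum_{j=0}^{k}a_{k,j}\cosh(2jx)$ expansion applied to each power with $x=t/(2m)$, absorption of one factor of $4$ into $p_{m,j,l}(t)=4\cosh(t/4)\cosh(jt/m)\cosh(lt/m)$, and the interchange $\sum_{k=0}^{m}\sum_{j=0}^{k}=\sum_{j=0}^{m}\sum_{k=j}^{m}$ (and likewise for $n,l$) to collect the coefficient $d_{m,j,l}$. Your side remark that the $\cos$ expressions in the statement are really the $\cosh$ product--to--sum identity (read under $t\mapsto it$) is a fair reading of what appears to be a typographical slip in the paper.
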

\begin{proof}

\begin{equation}
\aligned
j_{m}(t)&=\cosh(t/4)(4\sinh^2(t/(2m))+4a)^{m}(4\sinh^2(t/(2m))+4b)^{m}\\
&=4^{2m}\cosh(t/4)\sum_{k=0}^{m}\binom{m}{k}a^{m-k}\sinh^{2k}(t/(2m))
\sum_{n=0}^{m}\binom{m}{n}b^{m-n}\sinh^{2l}(t/(2m))\\
&=4^{2m}\sum_{k=0}^{m}\sum_{n=0}^{m}\sum_{j=0}^k\sum_{l=0}^n\binom{m}{k}\binom{m}{n}a^{m-k}b^{m-n} a_{k,j} a_{n,l}\\
&\qquad\qquad\qquad\qquad\qquad\times\left(\cosh(t/4)\cosh(jt/m)\cosh(lt/m)\right)\\
&=:4^{2m-1}\sum_{k=0}^{m}\sum_{n=0}^{m}\sum_{j=0}^k\sum_{l=0}^n\binom{m}{k}\binom{m}{n}a^{m-k}b^{m-n} a_{k,j} a_{n,l}p_{m,j,l}(t)\\
&=4^{2m-1}\sum_{j=0}^{m}\sum_{l=0}^{m}\sum_{k=j}^m\sum_{n=l}^m\binom{m}{k}\binom{m}{n}a^{m-k}b^{m-n} a_{k,j} a_{n,l}p_{m,j,l}(t)\\
&=\sum_{j=0}^{m}\sum_{l=0}^{m}\left(4^{2m-1}\sum_{k=j}^m\sum_{n=l}^m\binom{m}{k}\binom{m}{n}a^{m-k}b^{m-n} a_{k,j} a_{n,l}\right)p_{m,j,l}(t)\\
&=:\sum_{j=0}^{m}\sum_{l=0}^{m}d_{m,j,l}p_{m,j,l}(t)\\
\endaligned
\end{equation}
\end{proof}

\indent We next approximate $\Phi(t)$ with $\Phi_{S4}(m,t)$.

\begin{theorem}
Let
\begin{equation}\label{PhiS4m}
\Phi_{S4}(m,u)=2\pi^2 j_{m}(t)\exp\left(-2\pi \cosh t\right),
\end{equation}
\begin{equation}
j_{m}(t)=\cosh(t/4)(4\sinh^2(t/(2m)+4a)^{m}(4\sinh^2(t/(2m)+4b)^{m},
\end{equation}
where $a<b;m\in \N$.
The Fourier transform of $\Phi_{S4}(m;t)$ is:
\begin{equation}\label{XiS4m}
\aligned
\Xi_{S4}(m;2z)&=2\pi^2\sum_{j=0}^{m}\sum_{l=0}^{m}d_{m,j,l}P(m,j,l;z)\\
P(m,j,l;z)&=(G_{2\pi}(iz+1/4+2(+j+l)/m)+G_{2\pi}(iz-1/4-2(+j+l)/m))\\
&=(G_{2\pi}(iz+1/4+2(+j-l)/m)+G_{2\pi}(iz-1/4-2(+j-l)/m))\\
&=(G_{2\pi}(iz+1/4+2(-j+l)/m)+G_{2\pi}(iz-1/4-2(-j+l)/m))\\
&=(G_{2\pi}(iz+1/4+2(-j-l)/m)+G_{2\pi}(iz-1/4-2(-j-l)/m)).
\endaligned
\end{equation}

If $m=2,3$ and the parameters $-1<a<b<1$ are determined by 

\begin{equation}\label{abm1}
\Phi(0)=\Phi_{S4}(m,0)=\pi^2 e^{-2\pi}2^{3+4m}(ab)^m
\end{equation}
\begin{equation}\label{abm2}
\Phi''(0)=\Phi_{S4}''(m,0)= -\pi^2 e^{-2\pi} m^{-1}2^{1+4m}(ab)^{m-1}(abm(32\pi-1)-8(a+b)) 
\end{equation}

\noindent then 

(A)$\Phi_{S4}(m;t)\to \Phi(t),\text{ when }t\to\infty$;

(B)$\Phi_{S4}(m;0)= \Phi(0)$,\qquad $\Phi_{S4}''(m;0)= \Phi''(0)$; 

(C)the Fourier transform of  $\Phi_{S4}(m;t)$ has only real zeros.
\end{theorem}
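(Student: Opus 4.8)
The plan is to follow the same three-step template that carried Theorems 1--3: verify the tail condition (A) by an elementary asymptotic estimate, pin down the two free parameters through the head conditions (B) by matching Taylor coefficients at $t=0$, and reduce the real-rootedness (C) to the membership $j_m(it)\in\mathcal{LP}$ via \cref{Kiz}. The genuinely new feature here is that two parameters $a,b$ must be fixed by two conditions at $t=0$ (matching both $\Phi(0)$ and $\Phi''(0)$), rather than the single condition used before, so (B) becomes a small system rather than one equation.

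For (A) I would use $4\sinh^2(t/(2m))=2\cosh(t/m)-2$, so that each factor $4\sinh^2(t/(2m))+4a\sim e^{t/m}$ and likewise with $b$ as $t\to\infty$; raising each to the $m$-th power gives $\sim e^{t}$, and together with $\cosh(t/4)\sim\tfrac12 e^{t/4}$ the product $j_m(t)$ is asymptotic to a constant multiple of $e^{9t/4}$, i.e. to $\cosh(9t/4)$, whence $\Phi_{S4}(m;t)\to\Phi_P(t)\to\Phi(t)$. The Fourier-transform formula $\Xi_{S4}(m;2z)$ is then the term-by-term transform of the $\cosh$-expansion of $j_m$ furnished by the preceding Lemma, using integrals of the form $\int_0^\infty e^{-2\pi\cosh t}\cosh(\nu t)\cos(zt)\,\mathrm{d}t$ that generate the pairs $G_{2\pi}(iz\pm\nu)=K_{iz\pm\nu}(2\pi)$.

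For (B), since $\cosh(t/4)$, $\sinh^2(t/(2m))$ and $e^{-2\pi\cosh t}$ are all even, $\Phi_{S4}(m;t)$ is even and its odd derivatives vanish at $0$, so the first matching condition beyond the value is on the second derivative. Evaluating at $t=0$ gives $j_m(0)=4^{2m}(ab)^m$, which is \eqref{abm1}; expanding each factor to order $t^2$ and collecting the $t^2$-coefficient of $\Phi_{S4}$ gives \eqref{abm2}. Then \eqref{abm1} fixes the product $ab$ and \eqref{abm2} fixes the sum $a+b$, so $a$ and $b$ are the two roots of the single quadratic $x^2-(a+b)x+(ab)=0$. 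I would then check that the discriminant is nonnegative (real roots) and that the roots satisfy $-1<a<b<1$; for the two values $m=2,3$ this reduces to a finite numerical verification.

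For (C), substituting $t\mapsto it$ and using $4\sinh^2(it/(2m))=2\cos(t/m)-2$ yields
\[
j_m(it)=\cos(t/4)\,\bigl(2\cos(t/m)+4a-2\bigr)^m\bigl(2\cos(t/m)+4b-2\bigr)^m .
\]
Writing $4a-2=-2\cos\theta_a$, $4b-2=-2\cos\theta_b$ and using $2\cos\theta-2\cos\theta_0=-4\sin\frac{\theta+\theta_0}{2}\sin\frac{\theta-\theta_0}{2}$, each binomial factor splits into a product of shifted sines, each a real entire function of exponential type with only real zeros and hence in $\mathcal{LP}$; this step requires $\theta_a,\theta_b$ real, i.e. $|4a-2|\le2$ and $|4b-2|\le2$, equivalently $0\le a,b\le 1$. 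Since $\cos(t/4)\in\mathcal{LP}$ and $\mathcal{LP}$ is closed under products, $j_m(it)\in\mathcal{LP}$, so $j_m$ is a universal factor and \cref{Kiz} gives that $\Xi_{S4}(m;2z)$ has only real zeros. The hard part will be the compatibility of (B) and (C): the box $-1<a<b<1$ stated in the theorem is wider than the range $0\le a,b\le1$ that the factorization genuinely needs (a factor with $a<0$ is nonvanishing, bounded and oscillatory, hence not in $\mathcal{LP}$, and multiplying it against the rest destroys the argument). The crux is therefore to confirm that the $(a,b)$ obtained from \eqref{abm1}--\eqref{abm2} are real and actually lie in $[0,1]$; this is exactly what pins the statement to $m=2,3$, since for larger $m$ the solution may leave $[0,1]$ or cease to be real.
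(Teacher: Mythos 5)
Your proposal follows the paper's proof essentially step for step: the same tail asymptotic for (A); the same reduction of (B) to the symmetric-function system fixing $ab$ and $a+b$, solved as a quadratic in closed form and checked numerically for $m=2,3$; and the same reduction of (C) via \cref{Kiz} to showing $j_m(it)\in\mathcal{LP}$ under the condition $0<a<b<1$. Your explicit factorization of each factor $2\cos(t/m)-2+4a$ into shifted sines supplies a justification the paper only asserts, and your observation that the argument genuinely needs $a,b\in[0,1]$ rather than the stated $-1<a<b<1$ is a fair point about the hypothesis, though harmless here since the computed values $a(m),b(m)$ for $m=2,3$ lie in $(0,1)$.
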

\begin{proof}
When $t\to\infty$,
\begin{equation}
\cosh(t/4)\left(4\sinh^2(t/(2m)+4a\right)^{m}\left(4\sinh^2(t/(2m)+4b\right)^{m}\to \cosh(9t/4).
\end{equation}
We proved (A).

\indent Setting $\Phi_{S4}(m;0)=\Phi(0)$ and $\Phi_{S4}''(m;0)=\Phi''(0)$ leads to \eqref{abm1} and \eqref{abm2}, thus we proved (B).
 
Becuase of \cref{Kiz},it suffice to prove that $j_{m}(t)$ is a universal factor, or $j_{m}(it)\in \mathcal{LP}$. Since:
\begin{equation}
j_{m}(it)=\tilde{j}(t)=\cos(t/4)\left(4a-4\sin^2(t/(2m)\right)^{m}\left(4b-4\sin^2(t/(2m)\right)^{m},
\end{equation}
and $\cos(t/4)\in \mathcal{LP}$, it is suffice to prove that $0 < a < b < 1$.
From \eqref{abm1} and \eqref{abm2}, we obtain:
\begin{equation}\label{aplusb}
a+b=m\gamma \delta^{1/m},\qquad ab=(1/16)\delta^{1/m}
\end{equation}
\noindent where
\begin{equation}
\aligned
\gamma=\frac{4\Phi''(0)+\Phi(0)(32\pi-1)}{128\Phi(0)}\approx 0.192369\\
\delta=2^{-3}\pi^{-2}e^{2\pi}\Phi(0)\approx 6.059069
\endaligned
\end{equation}
From \eqref{aplusb}, we can solve for $a,b$ and obtain:
\begin{equation}\label{solnab}
\aligned
a(m)&=\frac{1}{2}m\gamma \delta^{1/m}-
\frac{1}{4}\left(\delta^{1/m}\left(4m^2\gamma^2\delta^{1/m}-1\right)\right)^{1/2},\\
b(m)&=\frac{1}{2}m\gamma \delta^{1/m}+
\frac{1}{4}\left(\delta^{1/m}\left(4m^2\gamma^2\delta^{1/m}-1\right)\right)^{1/2}.\\
\endaligned
\end{equation}
\noindent When $m=2$,we find numerically the solution: $a(2)\approx 0.208233<b(2)\approx0.738810<1$. When $m=3$,we find numerically the solution: $a(3)\approx 0.122579<b(3)\approx0.929527<1$.  Thus $\tilde{j}(t)\in \mathcal{LP}$. This proved (C). 
\end{proof}
\noindent We also find out that $a(1),b(1)\in \C$ and $0<a(m)<1<b(m),m>3$.

In Figure 15 below we showed comparison of differences: $\Phi_{S4}(m=2;t)-\Phi(t)$,$(a=0.208233,b=0.738810)$(Green); $\Phi_{S4}(m=3;t)-\Phi(t)$,$(a=0.122579,b=0.929527)$(Blue).
%
\begin{figure}[H]
\centering
\includegraphics[scale=0.8,keepaspectratio]{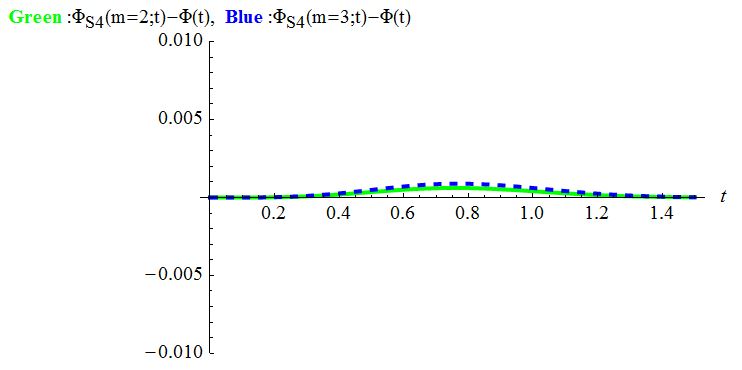}
\caption{Plots of various differences among Phi functions vs. $t$. This includes: $\Phi_{S4}(m=2;t)-\Phi(t)$,$(a=0.208233,b=0.738810)$(Green); $\Phi_{S4}(m=3;t)-\Phi(t)$,$(a=0.122579,b=0.929527)$(Blue).}\label{figure5}
\end{figure}
\noindent The relative differences in percentage are: 
\begin{equation}
\frac{\int_0^{\infty}|\Phi(t)-\Phi_{S4}(m=2;u)|\mathrm{d}t}{\int_0^{\infty}\Phi(t)\mathrm{d}t}\approx 0.835144 \%
\end{equation}
\begin{equation}
\frac{\int_0^{\infty}|\Phi(t)-\Phi_{S4}(m=3;t)|\mathrm{d}t}{\int_0^{\infty}\Phi(t)\mathrm{d}t}\approx 0.402822 \%
\end{equation}
In Figure 16 below we compare $\Xi_{S4}(m=2;z)$(Purple) against $\Xi(z)$(Red). It showed that there existed 29 zeros for both $\Xi_{S4}(m=2;z)$ and $\Xi(z)$.
%
\begin{figure}[H]
\centering
\includegraphics[scale=0.8,keepaspectratio]{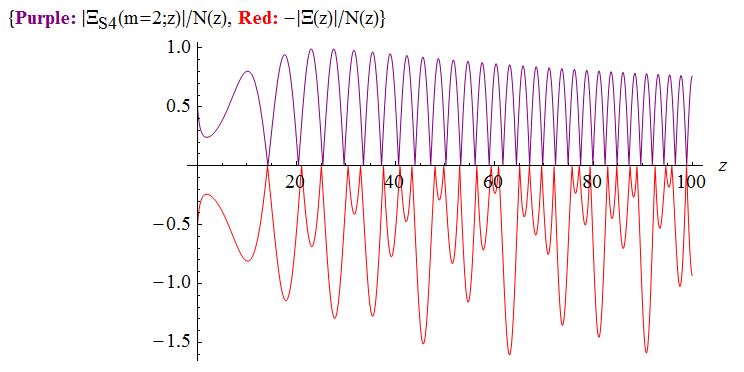}
\caption{Plots of various Xi functions vs. $z$. This includes: $|\Xi_{S4}(m=2;z)|/N(z)$(Purple); $-|\Xi(z)|/N(z)$(Red).}
\end{figure}
In Figure 17 below we compare $\Xi_{S4}(m=3;z)$(Blue) against $\Xi(z)$(Red). It showed that there existed 29 zeros for both $\Xi_{S4}(m=3;z)$ and $\Xi(z)$.
%
\begin{figure}[H]
\centering
\includegraphics[scale=0.8,keepaspectratio]{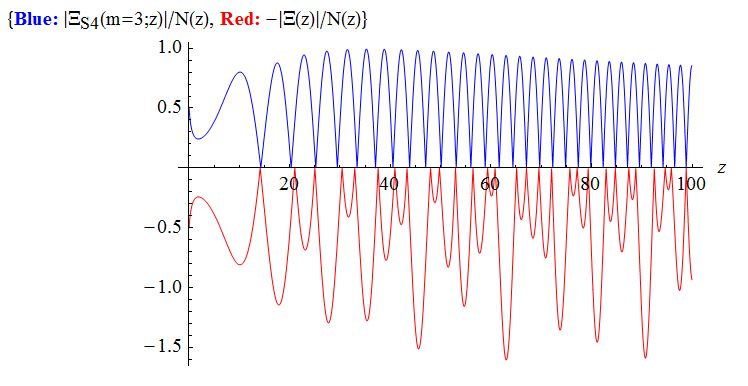}
\caption{Plots of various Xi functions vs. $z$. This includes: $|\Xi_{S4}(m=3;z)|/N(z)$(Blue); $-|\Xi(z)|/N(z)$(Red).}
\end{figure}
%

\section{concluding remarks}

Our criteria for picking kernel $K(t)$ to approximate the kernel $\Phi(t)$ of \eqref{Phi} for Riemann $\Xi(z)$ function are

(i) $K(t)\to \Phi(t),\quad t\to \infty$,

(ii) $K(0)=\Phi(0)$, 

(iii) $\int_0^{\infty}K(t)\cos(z t)\mathrm{d}t$ has only real zeros.

Using these criteria, we obtain several new and improved approximations to kernel $\Phi(t)$ and find out that their Fourier transforms have only real zeros.

Thus this method is quite general and it remains to be seen if one can better approximate the body of the kernel $\Phi(t)$.

\section{acknowledgment}

We appreciate the continuing support and help from Prof. Jie QING (math dept. of University of California at Santa Cruz), Prof. Zixiang ZHOU (math dept. of Fudan University), Prof. Xinyi YUAN, (math dept. of University of California at Berkeley), Dr .Xiaoyi WU.  We greatly appreciate the support and help from Mr. Peter M. Hallum. We appreciate the support and help from the open forums: math.stackexchange.com, tex.stackexchange.com, mathematica.stackexchange.com,  mathoverflow.stackexchange.com.
\pagebreak
%

\end{document}